\newif\ifdviwin
\newif\ifdviwin
\def\fC{\mathfrak{C}}
\def\ch{\mathfrak{h}}
\def\m2r{\mathbb{M}^2\times\mathbb{R}}
\def\h2r{\mathbb{H}^2\times\mathbb{R}}
\def\cte.{\mathop{\rm cte.}\nolimits}
\def\N{\mathbb{N}}
\def\R{\mathbb{R}}
\def\F{\mathcal{F}}
\def\M{\mathbb{M}}
\def\C{\mathcal{C}}
\def\H{\mathcal{H}}
\def\m2r{\M^2\times\R}
\def\h2r{\mathbb{H}^2\times\R}
\def\s2r{\mathbb{S}^2\times\R}
\def\hi{\H\in\c1}
\def\Hs{\mathcal{H}\text{-}\mathrm{surface}}
\def\Hss{\mathcal{H}\text{-}\mathrm{surfaces}}
\def\sig{\Sigma}
\def\r3{\mathbb{R}^3}
\def\c1{\fC^1([-1,1])}
 \newtheorem{defi}{Definition}[section]
 \newtheorem{teo}[defi]{Theorem}
 \newtheorem{pro}[defi]{Proposition}
 \newtheorem{lem}[defi]{Lemma}
 \newtheorem{remark}[defi]{Remark}
 \newtheorem{obs}[defi]{Observation}
 \newenvironment{proof}{\rm \trivlist \item[\hskip \labelsep{\it
      Proof}:]}{\nopagebreak \hfill $\Box$ \endtrivlist}
\numberwithin{equation}{section}
\begin{document}

\mbox{}\vspace{0.4cm}

\begin{center}
\renewcommand{\thefootnote}{\,}
{\Large \bf Half-space theorems for properly immersed surfaces in $\r3$ with prescribed mean curvature}
\footnote{\hspace{-.75cm}
\emph{Mathematics Subject Classification:} 53A10, 53C42, 34A26.\\
\emph{Keywords}: Prescribed mean curvature, product space, rotational surface, existence of spheres, Delaunay-type classification.\\
The author was partially supported by MICINN-FEDER Grant No. MTM2016-80313-P, Junta de Andalucía Grant No. FQM325 and FPI-MINECO Grant No. BES-2014-067663.}\\
\vspace{0.5cm} { Antonio Bueno}\\
\end{center}
\vspace{.5cm}
Departamento de Geometría y Topología, Universidad de Granada, E-18071 Granada, Spain. \\ 
\emph{E-mail address:} jabueno@ugr.es \vspace{0.3cm}

\begin{abstract}
Motivated by the large ammount of results obtained for minimal and positive constant mean curvature surfaces in several ambient spaces, the aim of this paper is to obtain half-space theorems for properly immersed surfaces in $\r3$ whose mean curvature is given as a prescribed function of its Gauss map. In order to achieve this purpose, we will study the behavior at infinity of a 1-parameter family of properly embedded annuli that are analogous to the usual minimal catenoids. 
\end{abstract}
 
\section{\large Introduction}
\vspace{-.5cm}

One of the most beautiful theorems in the theory of immersed minimal surfaces in $\r3$ is the \emph{half-space theorem} due to Hoffman and Meeks \cite{HoMe}, which can be formulated as follows:
\begin{teo}[Half-space theorem]
 A connected, proper, possibly branched, nonplanar minimal surface in $\r3$ cannot be contained in a half-space.
\end{teo}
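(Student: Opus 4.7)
The plan is a contradiction argument combining the geometric maximum principle with catenoids as barriers. Suppose $M \subset \{z \geq 0\}$, and after a vertical translation assume $z_0 := \inf_M z = 0$. The proof splits on whether this infimum is attained.

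First I would handle the case when $z_0 = 0$ is attained at some $p_0 \in M$: then $M$ is tangent from above to the plane $\Pi = \{z = 0\}$ at the interior point $p_0$. The Hopf-type maximum principle for two minimal surfaces with a one-sided interior tangency forces $M$ to coincide with $\Pi$ in a neighborhood of $p_0$, and by connectedness together with real-analyticity $M = \Pi$, contradicting non-planarity.

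Otherwise the infimum is not attained, and properness of $M$ produces a divergent sequence $p_n \in M$ with $z(p_n) \to 0$ and $|p_n| \to \infty$. The barrier here is the one-parameter family of vertical-axis catenoids $\{C_\lambda\}_{\lambda > 0}$ of waist radius $\lambda$ at height $0$, whose upper halves $C_\lambda^+$ are the radial graphs $z = \lambda\operatorname{arccosh}(r/\lambda)$ over $r \geq \lambda$. Two structural facts drive the argument: on every compact annulus $\{\rho \leq r \leq R\}$, $C_\lambda^+$ converges uniformly to $\Pi$ as $\lambda \to 0^+$; and each $C_\lambda^+$ is a proper minimal graph bounded by its waist circle on $\Pi$. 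I would use horizontal translates $C_\lambda^+ + (v, 0)$ with $v \in \mathbb{R}^2$ and $\lambda > 0$ small, positioned beneath a portion of $M$ that is close to $\Pi$, and slide them vertically upward until the first interior contact with $M$ occurs. At such a one-sided interior tangency of two minimal surfaces, the maximum principle forces $M$ to coincide locally with the catenoid, hence globally by analyticity---but a catenoid is not contained in any half-space, yielding the desired contradiction.

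The hardest step I anticipate is guaranteeing that the first contact actually occurs at an interior point and not at infinity. This should use properness twice: once to ensure that $M$ remains at a positive distance from $\Pi$ within any compact horizontal cylinder (which fixes the scale $\lambda$ of the candidate barrier), and once to produce a finite sliding parameter realizing the tangency. The graphical structure and asymptotic flatness of small-waist catenoids, together with properness, are the technical ingredients that make this compactness argument go through; this is the delicate core of the Hoffman--Meeks proof.
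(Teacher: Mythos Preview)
Your overall strategy---maximum principle plus catenoid barriers---is the right one, and your treatment of the case where the infimum is attained is fine. But the mechanism you propose in the non-attained case has a genuine gap, and it is not the mechanism Hoffman--Meeks (or this paper, in its own Theorem~\ref{half-spaceth}) actually use. The problem is the sliding step. You take the \emph{upper} half $C_\lambda^+$, the graph $z=\lambda\operatorname{arccosh}(r/\lambda)$ with unbounded height, and propose to place it ``beneath a portion of $M$'' and then slide it upward until first contact. Since $C_\lambda^+$ reaches every height, no vertical translate of it can lie entirely below $M\subset\{z\ge 0\}$; there is simply no admissible initial configuration from which to begin the sliding, so the notion of ``first contact'' is undefined from the start. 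The difficulty you flag at the end (interior contact versus escape to infinity) is real, but it is downstream of this more basic obstruction.

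The actual argument, as the paper sketches immediately after the statement and carries out in full in the proof of Theorem~\ref{half-spaceth}, does not slide a fixed catenoid. It varies the \emph{necksize} $r$ of the family $\{C_r^+\}_{r>0}$ with waist fixed on $\Pi$, using exactly the two structural facts listed there: (i) $C_r^+\to\Pi\setminus\{\mathbf{o}\}$ on compact sets as $r\to 0$, and (ii) each $C_r^+$ has unbounded height over $\Pi$. One first uses properness to find a ball $B(\mathbf{o},R_0)$ disjoint from $M$, then performs a single small vertical translation of $M$ (not of the catenoids) so that the translate $M^+$ still misses the ball but now protrudes through $\Pi$. For every $r<R_0$ the boundary circle of $C_r^+$ sits inside the ball and hence avoids $M^+$; continuity of the family in $r$ together with (i) and (ii) then forces a first interior tangency between $M^+$ and some $C_{r_0}^+$, and the comparison principle gives the contradiction. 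Note also that the paper does not split into your two cases: once $M\cap\Pi=\varnothing$ is established via the tangency principle, the necksize-variation argument handles everything uniformly.
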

Their proof is based on a clever application of two properties that minimal surfaces in $\r3$ satisfy:

\begin{itemize}
\item[1.] The coordinates of a minimal surface in $\r3$ are harmonic and thus minimal surfaces satisfy the \emph{tangency principle}: two minimal surfaces cannot be tangent in an interior point.

\item[2.] For every plane $\Pi\subset\r3$ and each line $L$ orthogonal to $\Pi$, there exists a 1-parameter family of properly embedded minimal annuli $\{\mathcal{C}(r)\}_{r>0}$ that are rotationally symmetric around $L$, and such that: \textbf{i)} $\{\mathcal{C}(r)\}_{r>0}$ smoothly converges to a double covering of $\Pi-(\Pi\cap L)$ when $r\rightarrow 0$, and \textbf{ii)} for each fixed $r_0>0$, $\mathcal{C}(r_0)$ is a symmetric bi-graph over $\Pi$ with both components having unbounded height w.r.t. $\Pi$.
\end{itemize}
The 1-parameter family of properly embedded minimal annuli $\{\mathcal{C}(r)\}_{r>0}$ are the minimal catenoids. Note that this theorem is not true in an Euclidean space $\R^n$ of arbitrary dimension, since minimal catenoids of dimension $n-1$ for $n>3$ are contained between two parallel planes.

Inspired by the ideas developed by Hoffman and Meeks, the existence of half-space theoerms in a class of immersed surfaces in several ambient spaces have attracted the attention of a large amount of geometers, becoming an active and fruitful field of research. In \cite{RoRo}, Rosenberg and Rodriguez obtained a half-space theorem for constant mean curvature one surfaces in the hyperbolic three space $\mathbb{H}^3$. Their proof is based in the original ideas of Hoffman and Meeks and in the fact that minimal surfaces in $\r3$ are locally isometric to constant mean curvature one surfaces in $\mathbb{H}^3$. Later, Hauswirth, Rosenberg and Spruck \cite{HRS} obtained a half-space theorem for constant mean curvature $1/2$ surfaces in the product space $\mathbb{H}^2\times\R$, and they exploited it to prove that complete multigraphs in $\mathbb{H}^2\times\R$ are indeed entire graphs over the whole hyperbolic plane $\mathbb{H}^2$. Finally, Daniel and Hauswirth \cite{DaHa} obtained half-space theorems for minimal surfaces in the Lie group $\mathrm{Nil}_3$, the Heisenberg space, and Daniel, Meeks and Rosenberg \cite{DMR} proved half-space theorems in $\mathrm{Nil}_3$ and also in the Lie group $\mathrm{Sol}_3$.

Motivated by these results, our purpose in this paper is to obtain half-space theorems for the following class of immersed surfaces in $\r3$: let be $\H\in C^1(\mathbb{S}^2)$. We say that an immersed surface $\sig$ in $\r3$ has \emph{prescribed mean curvature $\H$} if the mean curvature $H_\sig$ of $\sig$ satisfies at each $p\in\sig$
\begin{equation}\label{defHsupintro}
H_\sig(p)=\H(\eta_p),\hspace{.5cm} \forall p\in\sig,
\end{equation}
where $\eta:\sig\rightarrow\mathbb{S}^2$ is the \emph{Gauss map} of $\sig$. For short, we will say that $\sig$ is an $\H$-\emph{surface}. 

The definition of this class of immersed surfaces in $\r3$ has its origins in the famous Christoffel and Minkowski problems for ovaloids, see e.g. \cite{Chr}. In particular, the existence and uniqueness of ovaloids with prescribed mean curvature \eqref{defHsupintro} was studied among others by Alexandrov and Pogorelov \cite{Ale,Pog}. Besides the milestones reached concerning the uniqueness of ovaloids with prescribed mean curvature, the global properties of immersed surfaces in $\r3$ governed by Equation \eqref{defHsupintro} remained largely unexplored until Bueno, Gálvez and Mira \cite{BGM} developed the global theory of surfaces with prescribed mean curvature. In their paper, they covered topics such as the existence and classification of rotational surfaces, existence of a priori height and curvature estimates, stability properties, non-existence of complete stable surfaces and classification of properly embedded surfaces with at most one end. See also \cite{Bue1} for the resolution of the Björling problem for $\Hss$ in $\r3$ and \cite{Bue2,Bue3} for an extension of this theory to the product spaces $\mathbb{M}^2(\kappa)\times\R$.

The rest of the introduction is devoted to highlight the organization of the paper:

In \textbf{Section \ref{sec2}} we recall some basic properties of $\Hss$ in $\r3$. Locally, $\Hss$ are governed by a quasilinear, elliptic PDE, and thus they satisfy the mean curvature comparison principle and the maximum principle, see Lemmas \ref{meancurvprinc} and \ref{ppiomax}. For the particular case that the prescribed function $\H$ depends only on the height of the sphere, then it can be realized as a 1-dimensional function and Equation \eqref{defHsupintro} reads as
\begin{equation}\label{hdependealtura}
H_\sig(p)=\H(\langle\eta_p,e_3\rangle),\hspace{.5cm} \forall p\in\sig.
\end{equation}
In this situation, if $\hi$, see Equation \eqref{defhic1} for a proper definition of the space $\c1$, the study carried on in \cite{BGM} reveals that there exists a 1-parameter family of properly embedded annuli, called $\H$-catenoids, that are bi-graphs over a horizontal plane and will play the same role as minimal catenoids for our purpose. In Propositions \ref{comparisoncats} and \ref{monotoniaderivadas} we state some comparison theorems concerning the height and the derivative of the $\H$-catenoids.

In \textbf{Section \ref{sec3}} we analyze the behavior at infinity of the $\H$-catenoids, i.e. the boundness or unboundness of the heights of their graphical components.  In Proposition \ref{mismocomportamientohcats} we prove that the behavior at infinity of a family of $\H$-catenoids is determined by the behavior of an $\H$-catenoid $\sig_\H(r_0)$, for an arbitrary $r_0>0$. In Proposition \ref{hfmismocomportamiento} we relate the limit behavior of two functions $\H,\F\in\c1$ at the points $y=\pm 1$ with the behavior at infinity of two catenoids $\sig_\H(r_0)$ and $\sig_\F(r_0)$. We conclude this analysis in Theorem \ref{claseequiv}, proving that two prescribed functions $\H$ and $\F$ with the same limit behavior at the points $y=\pm 1$ determine prescribed mean curvature catenoids with the same behavior at infinity.

Bearing in mind the results obtained in Section 3, in \textbf{Section \ref{sec4}} we study the behavior at infinity of $\H$-catenoids for concrete choices of the prescribed function $\H$. Indeed, we prove in Theorem \ref{alfamayor1} that the prescribed functions $\H_\alpha(y)=-(1-y^2)^\alpha,\ \alpha>1$ generate $\H_\alpha$-catenoids with unbounded height. 

Finally, in \textbf{Section \ref{sec5}} we take advantage of the analysis made in the previous sections in order to obtain half-space theorems for properly immersed $\Hss$, provided that $\H\in C^1(\mathbb{S}^2)$ satisfy some necessary hypothesis.

\section{\large Properties of $\H$-surfaces}\label{sec2}
\vspace{-.5cm}

\begin{defi}\label{defHsup}
Let be $\H\in C^1(\mathbb{S}^2)$. An immersed surface $\sig$ in $\r3$ is an $\H$-surface if its mean curvature $H_\sig$ is given at every $p\in\sig$ by
\begin{equation}\label{hdependenormal}
H_\sig(p)=\H(\eta_p),
\end{equation}
where $\eta:\sig\rightarrow\mathbb{S}^2$ is the Gauss map of $\sig$. 
\end{defi}
It is clear from this definition that the only ambient isometries which are also isometries for the class of immersed $\Hss$ are Euclidean translations; any other ambient isometry changes the expression of $\eta$ and thus would not preserve Equation \eqref{defHsup}.

Before formulating two key properties that $\Hss$ satisfy, we need to introduce the concept of when a surface is locally above other. Let be $\sig_1$ and $\sig_2$ two immersed surfaces in the Euclidean space $\r3$. Suppose that there exists some point $p\in\sig_1\cap\sig_2$ such that $(\eta_{\sig_1})_p=(\eta_{\sig_2})_p$, where $\eta_{\sig_i}$ stands for the unit normal of the surface $\sig_i$. In this situation it is known that both $\sig_i$ can be expressed locally around $p$ as graphs $u_1,u_2$ defined in the same open set $\Omega$ of the tangent plane $T_p\sig_1=T_p\sig_2$ containing the origin $\textbf{o}$ and such that $u_i(\textbf{o})=p$.

\begin{defi}
With the previous hypothesis, we say that $\sig_1$ \emph{lies locally above} $\sig_2$ if $u_1\geq u_2$ in $\Omega$.
\end{defi}
The condition \emph{$\sig_1$ lies locally above $\sig_2$} will be written for short as $\sig_1\geq\sig_2$.

The next lemma relates the mean curvature of two surfaces lying one locally above of the other:

\begin{lem}[Mean curvature comparison principle]\label{meancurvprinc}
Let be $\sig_1,\sig_2$ two immersed surfaces in $\r3$ and denote by $H_i$ to the mean curvature of $\sig_i,\ i=1,2$. If $\sig_1\geq\sig_2$ around some $p\in\sig_1\cap\sig_2$, then $H_1(p)\geq H_2(p)$.
\end{lem}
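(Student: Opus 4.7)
The plan is to reduce the inequality to a standard Hessian comparison at a local minimum, after expressing both surfaces as graphs in suitable coordinates.

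First I would fix coordinates. Since $p\in\sig_1\cap\sig_2$ with $(\eta_{\sig_1})_p=(\eta_{\sig_2})_p$, I would choose a rigid motion of $\r3$ sending $p$ to the origin $\mathbf{o}$ and the common tangent plane $T_p\sig_1=T_p\sig_2$ to the horizontal plane $\{z=0\}$, in such a way that the common unit normal points in the direction $+e_3$. By the graph representation invoked in the definition of ``lies locally above'', there exists an open set $\Omega\subset\{z=0\}$ containing $\mathbf{o}$ and $C^2$ functions $u_1,u_2:\Omega\to\R$ whose graphs parametrize $\sig_i$ near $p$, with $u_1\geq u_2$ on $\Omega$ and $u_1(\mathbf{o})=u_2(\mathbf{o})=0$. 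Since the tangent planes at $p$ coincide with $\{z=0\}$, one further has $\nabla u_1(\mathbf{o})=\nabla u_2(\mathbf{o})=0$.

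Next I would use the graph formula for the mean curvature, computed with respect to the upward unit normal (which, by the normalization above, matches the prescribed $\eta_{\sig_i}$ at $p$):
\begin{equation*}
2H_i=\div\!\left(\frac{\nabla u_i}{\sqrt{1+|\nabla u_i|^2}}\right).
\end{equation*}
At $\mathbf{o}$ the gradient vanishes, so this expression simplifies to $2H_i(p)=\Delta u_i(\mathbf{o})=\tr(\mathrm{Hess}(u_i)(\mathbf{o}))$. Therefore
\begin{equation*}
2\bigl(H_1(p)-H_2(p)\bigr)=\Delta(u_1-u_2)(\mathbf{o})=\tr\bigl(\mathrm{Hess}(u_1-u_2)(\mathbf{o})\bigr).
\end{equation*}

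Finally, the function $v:=u_1-u_2$ is non-negative on $\Omega$ and vanishes at $\mathbf{o}$, so $\mathbf{o}$ is a local minimum of $v$. Standard calculus gives that $\mathrm{Hess}(v)(\mathbf{o})$ is positive semidefinite, hence its trace is non-negative. Plugging this into the identity above yields $H_1(p)\geq H_2(p)$, as desired.

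The calculation itself is elementary; the only delicate point, and hence the main thing to watch, is the orientation bookkeeping: one must verify that choosing upward-pointing graph normals is compatible with the hypothesis $(\eta_{\sig_1})_p=(\eta_{\sig_2})_p$, so that the mean curvatures $H_i(p)$ appearing in the conclusion are indeed the same quantities computed from the graph formula. Once this sign convention is fixed consistently for both graphs, the rest is automatic.
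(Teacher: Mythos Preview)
Your argument is correct and is the standard one. Note, however, that the paper does not actually prove this lemma: it is stated as a basic background fact (immediately followed by the maximum principle, also stated without proof), so there is no ``paper's own proof'' to compare against. Your write-up supplies exactly the elementary Hessian-at-a-minimum computation one expects, and your caveat about orientation bookkeeping is the right thing to flag; once the upward graph normal is aligned with the common $\eta_p$, the rest is automatic.
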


The study made in Section 2.1 in \cite{BGM} reveals that $\Hss$ in $\r3$ are solutions of a quasilinear, second order, elliptic PDE. In particular, the class of immersed $\Hss$ satisfy the Hopf maximum principle in both its interior and boundary versions, a result that has the following geometric implication:

\begin{lem}[Maximum principle for $\H$-surfaces]\label{ppiomax}
Let be $\sig_1,\sig_2$ two immersed $\H$-surfaces in $\r3$. Assume that one of the following two conditions holds:
\begin{enumerate}
\item There exists $p\in {\rm int}(\Sigma_1)\cap {\rm int}(\Sigma_2)$ such that $(\eta_{\sig_1})_p=(\eta_{\sig_2})_p$, where $\eta_{\sig_i}$ denotes the unit normal of $\Sigma_i$, $i=1,2$.
\item There exists $p\in\partial\Sigma_1\cap\partial\Sigma_2$ such that $(\eta_{\sig_1})_p=(\eta_{\sig_2})_p$ and $(\xi_{\sig_1})_p=(\xi_{\sig_2})_p$, where $(\xi_{\sig_i})_p$ denotes the interior unit conormal of $\partial\Sigma_i$.
\end{enumerate}
Assume moreover that $\Sigma_1$ lies around $p$ at one side of $\Sigma_2$. Then $\Sigma_1=\Sigma_2$.
\end{lem}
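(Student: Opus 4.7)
The plan is to reduce Lemma \ref{ppiomax} to Hopf's classical maximum principle for linear, uniformly elliptic operators, by exploiting the fact, recalled from Section 2.1 of \cite{BGM}, that $\Hss$ are locally governed by a quasilinear elliptic PDE.

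First, I would choose a small neighborhood $U$ of $p$ on which both $\sig_1$ and $\sig_2$ are graphs $u_1,u_2$ over a common domain $\Omega\subset T_p\sig_1=T_p\sig_2$ containing $\textbf{o}$. The equality of the Gauss maps $(\eta_{\sig_1})_p=(\eta_{\sig_2})_p$ forces $u_i(\textbf{o})=0$ and $\nabla u_i(\textbf{o})=0$ after identifying $T_p\sig_i$ with $\R^2$ via an isometry. In these coordinates the $\H$-surface equation \eqref{hdependenormal} reads
$$Q[u]\,:=\,\div\!\left(\frac{\nabla u}{\sqrt{1+|\nabla u|^2}}\right)\,-\,2\,\H\!\left(\nu(u)\right)\,=\,0,\qquad \nu(u)=\frac{(-\nabla u,1)}{\sqrt{1+|\nabla u|^2}},$$
a quasilinear elliptic PDE satisfied by both $u_1$ and $u_2$.

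Next, I would consider $w:=u_1-u_2$ and rewrite
$$Q[u_1]-Q[u_2]\,=\,\int_0^1\frac{d}{ds}Q\!\left[su_1+(1-s)u_2\right]ds\,=\,0.$$
Using the $C^1$ regularity of $\H$ and expanding the integrand via the chain rule produces a linear equation $L[w]=a^{ij}(x)\partial_{ij}w+b^i(x)\partial_iw+c(x)w=0$, with $L$ uniformly elliptic in a neighborhood of $\textbf{o}$ (since $|\nabla u_k|$ is small there) and $b^i,c$ bounded. The hypothesis that $\sig_1$ lies locally on one side of $\sig_2$ translates into $w$ having constant sign on $\Omega$.

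In case $(1)$, the interior tangency at $p$ means $w$ attains an interior extremum $w(\textbf{o})=0$, and the strong interior maximum principle forces $w\equiv 0$ on a neighborhood. In case $(2)$, the additional coincidence of the interior unit conormals translates into $\partial_\nu w(\textbf{o})=0$ at a boundary extremum of $w$; Hopf's boundary point lemma then rules this out unless $w\equiv 0$ on a neighborhood. In either case $\sig_1=\sig_2$ near $p$, and a standard open-and-closed continuation argument along the connected components of $\sig_1$ and $\sig_2$ concludes the proof.

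The main technical obstacle is ensuring that the linearization $L$ is uniformly elliptic with bounded coefficients on a fixed neighborhood of $p$; this is precisely where the $C^1$ regularity of $\H\in C^1(\mathbb{S}^2)$ enters, through a mean-value-theorem argument applied to $\H(\nu(u_1))-\H(\nu(u_2))$, which produces first-order and zeroth-order terms with bounded coefficients and so does not disrupt the applicability of the Hopf lemmas.
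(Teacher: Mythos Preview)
Your proposal is correct and follows exactly the approach the paper indicates: the paper does not give a self-contained proof of Lemma~\ref{ppiomax} at all, but simply states it as the geometric consequence of the Hopf maximum principle applied to the quasilinear elliptic PDE governing $\Hss$, referring to Section~2.1 of \cite{BGM}. Your write-up supplies precisely the details behind that one-line justification---writing both surfaces as graphs, linearizing $Q[u_1]-Q[u_2]$ along the segment $su_1+(1-s)u_2$, and invoking the interior strong maximum principle or the Hopf boundary-point lemma---so there is nothing to compare beyond noting that you have expanded what the paper leaves implicit.
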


As we mentioned in the introduction of Section \ref{sec2}, the only isometries of $\r3$ that preserve Equation \eqref{hdependenormal} are Euclidean translations. Thus, if we expect to define rotationally symmetric $\Hss$, additional symmetries have to be imposed to the prescribed function $\H$. In this fashion, if suppose that the prescribed function $\H\in C^1(\mathbb{S}^2)$ only depends on the height of the sphere, then Equation \eqref{hdependenormal} for an immersed $\Hs$ $\sig$ reads as
\begin{equation}\label{hdependeangulo}
H_\sig(p)=\H(\eta_p)=\ch(\langle\eta_p,e_3\rangle),\hspace{.5cm} \forall p\in\sig,
\end{equation}
where $\ch\in C^1([-1,1])$ and the quantity $\langle(\eta_\sig)_p,e_3\rangle$ is the so called \emph{angle function}, which will be denoted for short by $\nu_\sig(p)$.

Now, the ambient isometries that preserve Equation \eqref{hdependeangulo} are the following: Euclidean translations, the isometric $SO(2)$-action of rotations that leave pointwise fixed any vertical line, and reflections w.r.t. any vertical plane; any of these isometries leaves invariant the angle function of an immersed surface and thus preserves Equation \eqref{hdependeangulo}.

Otherwise stated, we will restrict ourselves to prescribed functions that depend only on the height of the sphere, and thus the class of $\Hss$ are governed by Equation \eqref{hdependeangulo}. For the sake of clarity, the 1-dimensional prescribed function $\ch$ that appears in Equation \eqref{hdependeangulo} will be denoted again by $\H$.

In Section 3 in \cite{BGM} the authors studied rotationally symmetric $\Hss$ for several choices of prescribed 1-dimensional functions $\H\in C^1([-1,1])$, obtaining a large amount of rotational examples with different topological properties and behaviors at infinity. In this paper we will restrict ourselves to the following class of 1-dimensional functions
\begin{equation}\label{defhic1}
\mathfrak{C}^1([-1,1]):=\{\H\in C^1([-1,1]);\ \H(y)<0,\ \forall y\in (-1,1),\ \H(-1)=\H(1)=0\}.
\end{equation}
For the particular choice $\hi$, Proposition 3.6 in \cite{BGM} proves the existence of the following family of rotationally symmetric $\Hss$:

There exists a continuous 1-parameter family $\{\sig_\H(r)\}_{r>0}$ of properly embedded, rotationally symmetric $\Hss$ around the vertical line passing through the origin, all having the topology of an annulus. For each $r_0>0$, $\sig_\H(r_0)$ is a bi-graph over the exterior of the disk $D(0,r_0)$ contained in a horizontal plane (which can be supposed to be the plane $\{z=0\}$ after a vertical translation). The annulus $\sig_\H(r_0)$ is foliated by parallel circumferences, and the smallest one is the given by the intersection $\sig_\H(r_0)\cap\{z=0\}$, which is called the \emph{waist} of $\sig_\H(r_0)$ and whose \emph{necksize}, i.e. the radius of the waist, is precisely $r_0$. In particular, each $\sig_\H(r_0)$ is contained inside $\r3-(D(0,r_0)\times\R)$. 

Each component of the bi-graph defines an end of $\sig_\H(r_0)$. We define $\sig_\H^+(r_0):=\sig_\H(r_0)\cap\{z\geq 0\}$ (resp. $\sig_\H^-(r_0):=\sig_\H(r_0)\cap\{z\leq 0\}$) the upper (resp. lower) end of $\sig_\H(r_0)$. If we denote by $\eta_{\sig_\H(r_0)}$ to the unit normal of $\sig_\H(r_0)$, then $\eta_{\sig_\H(r_0)}$ points inwards at the waist, upwards at the upper end and downwards at the lower end.

\begin{figure}[H]
\centering
\includegraphics[width=.9\textwidth]{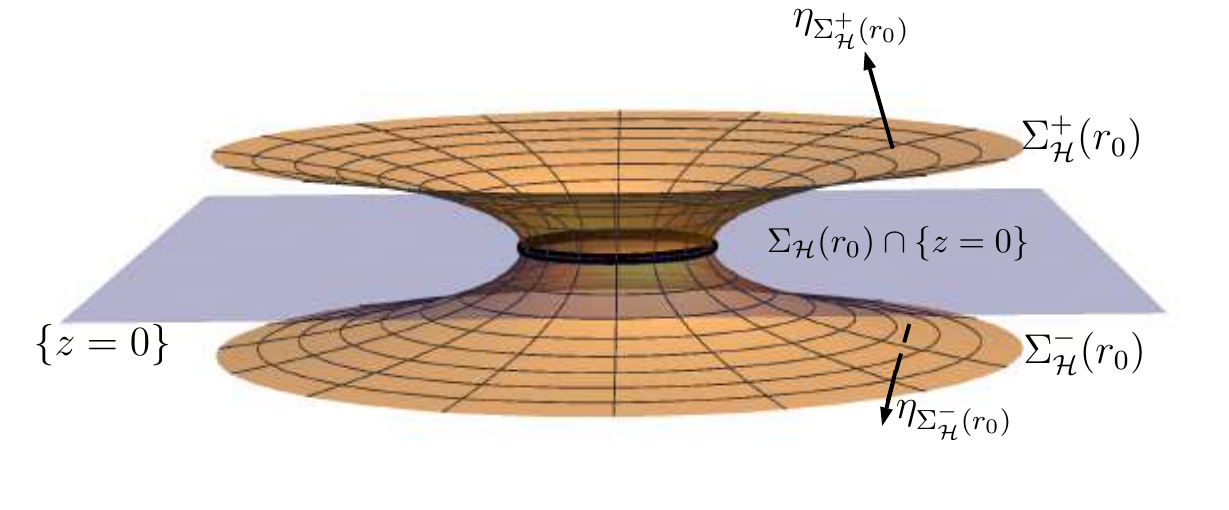}
\caption{An $\H$-catenoid for the prescribed choice $\H(y)=-(1-y^2)^2$, which is a symmetric bi-graph over the horizontal plane $\{z=0\}$. The waist is plotted in black.}
\label{homotecia}
\end{figure}

Because of the orientation that $\sig_\H(r_0)$ has, we can parametrize the upper end $\sig_\H^+(r_0)$ by rotating the graph of a function $f_{\sig_\H^+(r_0)}:(r_0,\infty)\rightarrow\R$ around the vertical axis passing through the origin. When fixing $\hi$ and $r_0>0$, in order to save notation, the function $f_{\sig_\H^+(r_0)}$ will be just denoted by $f_+$. A parametrization of $\sig_\H^+(r_0)$ minus one meridian is given by
\begin{equation}\label{pararot}
\psi_{f_+}(x,\theta)=(x\cos\theta,x\sin\theta,f_+(x)),\hspace{.5cm} x>r_0,\ \theta\in (0,2\pi).
\end{equation}

The orientation induced by $\psi_{f_+}$ is given by the unit normal 
$$
\eta_{f_+}=\frac{1}{\sqrt{1+{f_+}'(x)^2}}(-{f_+}'(x),1),
$$
which happens to be the upwards one in this upper end. For this parametrization $\psi_{f_+}$, the mean curvature $H_{\sig_\H^+(r_0)}$ of $\sig_\H^+(r_0)$ satisfies the following ODE
\begin{equation}\label{eqcurvmedia}
2H_{\sig_\H^+(r_0)}(\psi_{f_+}(x,\theta))=\frac{{f_+}''(x)}{(1+{f_+}'(x)^2)^{3/2}}+\frac{{f_+}'(x)}{x\sqrt{1+{f_+}'(x)^2}},\hspace{.5cm} \forall x>r_0.
\end{equation}
As $\sig_\H^+(r_0)$ is an $\H$-surface, Equation \eqref{defHsup} now reads as
\begin{equation}\label{condicionHsup}
2\H(\nu_{f_+}(x))=\frac{{f_+}''(x)}{(1+{f_+}'(x)^2)^{3/2}}+\frac{{f_+}'(x)}{x\sqrt{1+{f_+}'(x)^2}},\hspace{.5cm} \forall x>r_0,
\end{equation}
where $\nu_{f_+}:\sig_\H^+(r_0)\rightarrow\R$ is the \emph{angle function}
\begin{equation}\label{angulografo}
\nu_{f_+}(x)=\frac{1}{\sqrt{1+{f_+}'(x)^2}},\hspace{.5cm} \forall x>r_0.
\end{equation}

Moreover, we can solve Equation \eqref{condicionHsup} in terms of $f_+''(x)$ and conclude that is a solution of the ODE
\begin{equation}\label{ODEfsegunda}
f_+''(x)=\left(1+f_+'(x)^2\right)\left(2\H(\nu_{f_+}(x))\sqrt{1+f_+'(x)^2}-\frac{f_+'(x)}{x}\right).
\end{equation}

Let us analyze the lower end $\sig_\H^-(r_0)$. If we parametrize $\sig_\H^-(r_0)$ as in Equation \eqref{pararot} for a function $f_-$, then this parametrization does not define an $\H$-surface for the prescribed choice $\H$, since this time the unit normal induced by this parametrization, namely 
\begin{equation}\label{normalcontrario}
\eta_{f_-}=\frac{1}{\sqrt{1+{f_-}'(x)^2}}(-{f_-}'(x),1),
\end{equation}
is again the upwards one. In particular, the mean curvature with this parametrization is positive. Nonetheless, up to a change of the orientation, which in particular changes the sign of the mean curvature, this parametrization defines the lower end of $\sig_\H(r_0)$.

The functions $f_+$ and $f_-$ will be called the \emph{upper height and lower height} of $\sig_\H(r_0)$, respectively. These functions are defined in the same interval $(r_0,\infty)$, and they can be smoothly glued together at $x=r_0$, where they meet each other at the plane $\{z=0\}$ in an orthogonal way, defining the waist of the $\H$-catenoid.


\subsection{Comparison of the height and the derivative of $\H$-catenoids}
In the case that we have a rotational $\Hs$, and in particular for prescribed mean curvature catenoids, the mean curvature comparison principle has the following implication.

\begin{pro}\label{comparisoncats}
Let be $\H,\F\in\c1$ and $r_0>0$, and suppose that $\H(y)>\F(y)$ for all $y\in (-1,1)$.
\begin{itemize}
\item[1.] If $h_+(x)$ and $f_+(x)$ denote the upper heights of $\sig_\H(r_0)$ and $\sig_\F(r_0)$, then $h_+(x)>f_+(x)$ for all $x>r_0$.
\item[2.] If $h_-(x)$ and $f_-(x)$ denote the lower heights of $\sig_\H(r_0)$ and $\sig_\F(r_0)$, then $h_-(x)<f_-(x)$ for all $x>r_0$.
\end{itemize}
\end{pro}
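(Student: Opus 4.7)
My plan is to combine a Taylor expansion at the common waist with the mean curvature comparison principle (Lemma \ref{meancurvprinc}), applied to a vertical translate of $\sig_\F(r_0)$ tangent to $\sig_\H(r_0)$ at the maximum of the gap function. Part 2 follows by the symmetric argument on the lower ends.

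For the local step, I re-parametrise each half of the two catenoids by its inverse profile $x = \rho(z)$, which satisfies $\rho(0) = r_0$ and $\rho'(0) = 0$ because both catenoids meet $\{z=0\}$ orthogonally at the common waist. Rewriting Equation \eqref{ODEfsegunda} in terms of $\rho$ and evaluating at $z = 0$ yields
$$\rho_\H''(0) = \frac{1}{r_0} - 2\H(0), \qquad \rho_\F''(0) = \frac{1}{r_0} - 2\F(0),$$
so the hypothesis $\H(0) > \F(0)$ forces $\rho_\H''(0) < \rho_\F''(0)$. The second-order Taylor expansion $\rho_\F(z) - \rho_\H(z) = \tfrac12(\rho_\F''(0) - \rho_\H''(0)) z^2 + o(z^2)$ then gives $\rho_\F(z) > \rho_\H(z)$ for every small $z \neq 0$, and inverting the monotone profiles yields both $h_+(x) > f_+(x)$ and $h_-(x) < f_-(x)$ on some interval $(r_0, r_0 + \varepsilon)$.

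For the global step I argue by contradiction in Part 1 (Part 2 is identical). Set $x^* := \inf\{x > r_0 : h_+(x) \leq f_+(x)\}$ and suppose $x^* < \infty$. By the local step $x^* > r_0$, and by continuity the gap $u := h_+ - f_+$ vanishes at $r_0$ and at $x^*$ and is strictly positive on $(r_0, x^*)$; extended by zero at the endpoints, $u$ attains a positive maximum at some $x_m \in (r_0, x^*)$ with $u'(x_m) = 0$. Setting $c := u(x_m) > 0$, the vertical translate $\sig_\F(r_0) + c\, e_3$ and $\sig_\H(r_0)$ meet at $p^* = (x_m, 0, h_+(x_m))$ with matching tangent planes and matching upward unit normals, and the inequality $f_+ + c \geq h_+$ on $(r_0, x^*)$ combined with rotational symmetry places $\sig_\F(r_0) + c\, e_3$ locally above $\sig_\H(r_0)$ around $p^*$. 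Since vertical translations preserve both the mean curvature and the angle function, Lemma \ref{meancurvprinc} gives
$$\F(\nu^*) = H_{\sig_\F(r_0)+c\,e_3}(p^*) \geq H_{\sig_\H(r_0)}(p^*) = \H(\nu^*),$$
with $\nu^* = 1/\sqrt{1+h_+'(x_m)^2} \in (-1, 1)$, which contradicts $\H > \F$ on $(-1,1)$. Thus $h_+(x) > f_+(x)$ for every $x > r_0$. Part 2 is obtained by the symmetric construction, with $f_- - h_- > 0$ in place of $u$ and a downward translation of $\sig_\H(r_0)$. The main obstacle is upgrading the one-dimensional inequality $f_+ + c \geq h_+$ on an interval to the genuinely two-dimensional local-above statement that Lemma \ref{meancurvprinc} requires, but this is automatic from rotational symmetry once both surfaces are expressed as graphs over the common tangent plane in a full neighbourhood of $p^*$.
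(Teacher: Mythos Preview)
Your proof is correct and follows the same strategy as the paper: a local step near the common waist, then a contradiction via a vertical translation of $\sig_\F(r_0)$ and Lemma~\ref{meancurvprinc} at the resulting interior tangency. The differences are only in execution: you justify the local step by an explicit second-order Taylor expansion of the inverse profile $\rho(z)$ (the paper instead invokes the comparison principle directly at the waist), and you locate the tangency point as the maximum of the gap $h_+-f_+$ and translate by exactly that amount, whereas the paper restricts to the compact slab between $\{x=r_0\}$ and $\{x=r_1\}$, moves $\widetilde{\sig_\F^+(r_0)}$ far up, and brings it back down to a first interior contact.
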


\begin{proof}
The proof will be done for the upper ends of $\sig_\H(r_0)$ and $\sig_\F(r_0)$, since the argument is similar for the lower ends.

Consider the catenoids $\sig_\F(r_0)$ and $\sig_\H(r_0)$, whose necksizes are exactly $r_0$. Both $\sig_F(r_0)$ and $\sig_\H(r_0)$ are tangent at $r_0$ with unit normals agreeing at their waists, and thus the mean curvature comparison principle ensures us that $\sig_\H(r_0)$ lies locally above $\sig_\F(r_0)$. Because $\sig_H(r_0)$ lies locally above $\sig_\F(r_0)$, we have that $h_+(x)>f_+(x)$ for $x>r_0$ close enough to $r_0$. 

Arguing by contradiction, suppose that there exists some $r_1>r_0$ such that $h_+(r_1)=f_+(r_1)$. Consider the slab $\mathcal{S}$ in $\r3$ determined by the vertical planes $\{x=r_0\}$ and $\{x=r_1\}$, and denote by $\widetilde{\sig_\H^+(r_0)}$ and $\widetilde{\sig_\F^+(r_0)}$ to the intersections of $\sig_\H^+(r_0)$ and $\sig_\F^+(r_0)$ with $\mathcal{S}$. Notice that $\widetilde{\sig_\H^+(r_0)}$ and $\widetilde{\sig_\F^+(r_0)}$ only intersect each other along their (compact) boundaries, which are contained in $\partial\mathcal{S}$.

Consider the uniparametric group of vertical translations $T_s(p)=p+(0,0,s),\ s>0$, and the translated $\Hss$ $T_s(\widetilde{\sig_\F^+(r_0)})$. Because the boundary of $\widetilde{\sig_\F^+(r_0)}$ is compact, there exists some $s_0>0$ such that $T_{s_0}(\widetilde{\sig_\F^+(r_0)})\cap\widetilde{\sig_\H^+(r_0)}=\varnothing$. Then, we decrease the parameter $s$ starting from $s_0$ until we find an instant $s_1\in (0,s_0)$ such that $T_{s_1}(\widetilde{\sig_\F^+(r_0)})$ has a first contact point of intersection $p_1$ with $\widetilde{\sig_\H^+(r_0)}$. Since $\widetilde{\sig_\H^+(r_0)}$ and $\widetilde{\sig_\F^+(r_0)}$ only intersected along their boundaries, this point of intersection must be an interior one. See Figure \ref{arribaabajo} for a diagram of this process.

\begin{figure}[H]
\centering
\includegraphics[width=.7\textwidth]{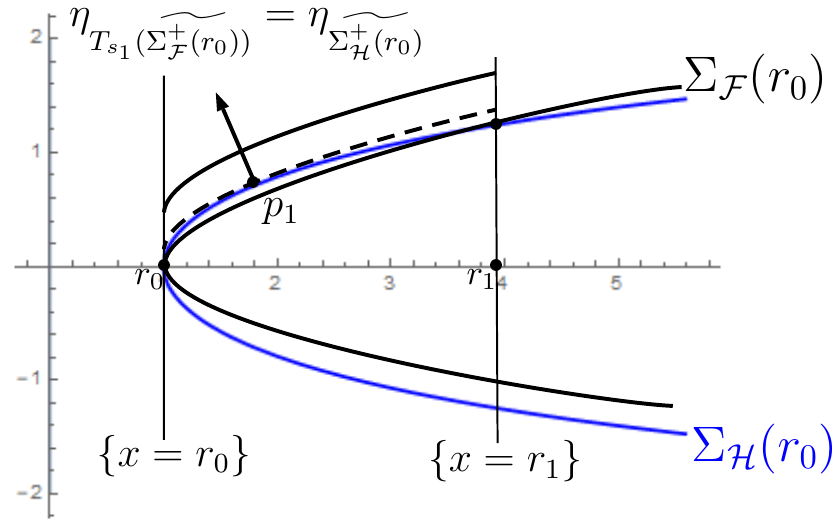}
\caption{A diagram showing the upwards and downwards movement of the compact piece $\widetilde{\sig_\F^+(r_0)}$, arriving to a contradiction.}
\label{arribaabajo}
\end{figure}

At $p_1$ the unit normals of $\widetilde{\sig_\H^+(r_0)}$ and $T_{s_1}(\widetilde{\sig_\F^+(r_0)})$ agree. Moreover, $T_{s_1}(\widetilde{\sig_\F^+(r_0)})$ lies above $\widetilde{\sig_\H^+(r_0)}$ around $p_1$, but their mean curvatures satisfy
$$
H_{T_{s_1}(\widetilde{\sig_\F^+(r_0)})}(p)=\F((\eta_{T_{s_1}(\widetilde{\sig_\F^+(r_0)})})_{p_1})<\H((\eta_{T_{s_1}(\widetilde{\sig_\F^+(r_0)})})_{p_1})=\H((\eta_{\widetilde{\sig_\H^+(r_0)}})_{p_1})=H_{\widetilde{\sig_\H^+(r_0)}}(
p_1),
$$
where we have used that $\H(y)>\F(y)$ pointwise. This is a contradiction with the mean curvature comparison principle. 

The same proof holds for the lower ends, by just considering vertical translations that decrease the height. This completes the proof of Proposition \ref{comparisoncats}.
\end{proof}

The next proposition gives us information about the derivatives of the $\H$-catenoids.

\begin{pro}\label{monotoniaderivadas}
Let be $\H,\F\in\mathfrak{C}^1([-1,1])$ such that $\H(y)>\F(y)$ for all $y\in (-1,1)$, and let be $r_0>0$ and $x_0>r_0$. Denote by $h_+,h_-,f_+,f_-$ to the upper and lower heights of $\sig_\H(r_0)$ and $\sig_\F(r_0)$, respectively. Then,
\begin{itemize}
\item[1.] If ${h_+}'(x_0)>{f_+}'(x_0)$, we have ${h_+}'(x)>{f_+}'(x)$ for all $x>x_0$.
\item[2.] If ${h_-}'(x_0)<{f_-}'(x_0)$, we have ${h_+}'(x)<{f_+}'(x)$ for all $x>x_0$.
\end{itemize}
\end{pro}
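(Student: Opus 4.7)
The plan is to exploit the fact that the ODE \eqref{ODEfsegunda} involves $f_+$ only through its first derivative, so it is really a first-order equation for $u := f_+'$. I set $g(x) := h_+'(x) - f_+'(x)$; by hypothesis $g(x_0) > 0$, and the content of Claim 1 is that $g > 0$ on $(x_0,\infty)$.

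I argue by contradiction. If $g$ vanishes somewhere in $(x_0,\infty)$, let $x_1 := \inf\{x > x_0 : g(x) = 0\}$; continuity forces $x_1 > x_0$, $g(x_1) = 0$ and $g > 0$ on $[x_0, x_1)$. Hence $g$ attains a left-local minimum at $x_1$, so $g'(x_1) \leq 0$. On the other hand, at $x_1$ the common value $c := h_+'(x_1) = f_+'(x_1)$ produces equal angle functions $\nu_{h_+}(x_1) = \nu_{f_+}(x_1) = 1/\sqrt{1+c^2} =: \nu_1$, and subtracting the two copies of \eqref{ODEfsegunda} at $x_1$ cancels the terms $c/x_1$, leaving
\[
g'(x_1) \;=\; h_+''(x_1) - f_+''(x_1) \;=\; 2(1+c^2)^{3/2}\,\bigl(\H(\nu_1) - \F(\nu_1)\bigr).
\]
Provided $\nu_1 \neq 1$ (equivalently $c \neq 0$), the strict pointwise inequality $\H > \F$ forces $g'(x_1) > 0$, contradicting $g'(x_1) \leq 0$. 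This proves Claim 1 away from the degenerate value $\nu_1 = 1$.

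Claim 2 I would reduce to Claim 1 by reparametrization. The parametrization of the lower end by $f_-$ uses the upward normal \eqref{normalcontrario} but describes a geometric normal pointing downward; once this orientation flip is accounted for, $f_-$ satisfies an ODE of exactly the form \eqref{ODEfsegunda} with $\H$ replaced by $\widetilde{\H}(y) := -\H(-y)$. Under this transformation $\H > \F$ on $(-1,1)$ becomes $\widetilde{\H} < \widetilde{\F}$ on $(-1,1)$, and the hypothesis $h_-'(x_0) < f_-'(x_0)$, together with the conclusion (which I read as $h_-'(x) < f_-'(x)$, interpreting the $+$ signs in the printed statement as typos), is exactly the Claim~1 pattern applied to the pair $(\widetilde{\F}, \widetilde{\H})$. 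The same first-crossing argument then applies verbatim.

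The main obstacle I anticipate is the boundary case $\nu_1 = 1$. Because $\H(1) = \F(1) = 0$ by the very definition of $\c1$, the leading computation there only yields $g'(x_1) = 0$ and the first-derivative test is inconclusive. Surmounting this degeneracy requires either a geometric input showing that $h_+'$ and $f_+'$ remain strictly positive along the entire upper end of an $\H$-catenoid, so that the crossing value $c = 0$ never occurs, or a higher-order Taylor expansion of $g$ at $x_1$ involving the derivatives of $\H - \F$ at $\nu = 1$. The former is the cleaner route and presumably follows from the qualitative description of $\H$-catenoids in \cite{BGM}.
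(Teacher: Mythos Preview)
Your argument and the paper's are two presentations of the same idea. The paper observes that $h_+'$ and $f_+'$ satisfy first-order ODEs $u'=\phi_\H(x,u)$ and $u'=\phi_\F(x,u)$ with $\phi_\H\geq\phi_\F$, and then simply invokes a classical ODE comparison theorem to conclude from the strict initial inequality $h_+'(x_0)>f_+'(x_0)$. Your first-crossing argument is essentially an in-line proof of that comparison theorem in this particular situation, so the underlying mechanism is identical.

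The degeneracy you flag at $\nu_1=1$ (equivalently $c=0$) is genuine in your formulation but easy to dispatch, and you have already identified the cleanest fix. Since $\H(1)=0$, the constant function $u\equiv 0$ solves $u'=\phi_\H(x,u)$; because $\phi_\H$ is $C^1$ in $u$, Picard--Lindel\"of uniqueness forbids any other solution from touching it. Hence $h_+'>0$ everywhere on $(r_0,\infty)$, and the same holds for $f_+'$, so the crossing value $c=0$ never occurs and no appeal to \cite{BGM} or to higher-order Taylor expansions is needed. The paper's black-box route sidesteps this altogether: the standard comparison theorem requires only $\phi_\H\geq\phi_\F$, the strict initial inequality, and local Lipschitz continuity of $\phi_\F$ in its second argument; the Gronwall-type estimate built into the theorem then preserves the strict gap automatically.

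Your reduction for Item 2 via $\tilde{\H}(y)=-\H(-y)$ is correct, and your reading of the printed $+$ subscripts in the conclusion as typos for $-$ is the intended interpretation.
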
 

\begin{proof}
The proof will be done for the upper heights $h_+,f_+$, since it is analogous for the lower heights. For the sake of clarity, we will drop the sub index $(\cdot)_+$, and just write $h,f$. First, recall from Equation \eqref{ODEfsegunda} that both $h$ and $f$ are solutions of the ODE's
\begin{equation}\label{hfsegunda}
\begin{array}{c}
\vspace{.5cm}h''(x)=\left(1+h'(x)^2\right)\displaystyle{\left(2\H(\nu_h)\sqrt{1+h'(x)^2}-\frac{h'(x)}{x}\right)},\\
f''(x)=\displaystyle{\left(1+f'(x)^2\right)\left(2\F(\nu_f)\sqrt{1+f'(x)^2}-\frac{f'(x)}{x}\right)},
\end{array}
\end{equation}
where $\nu_h=1/\sqrt{1+h'(x)^2}$ is the angle function of $\sig_\H^+(r_0)$, and the same holds for $\nu_f$.

Let us write
$$
\phi_\H(x,y)=\displaystyle{\left(1+y^2\right)\left(2\H\left(\frac{1}{\sqrt{1+y^2}}\right)\sqrt{1+y^2}-\frac{y}{x}\right)},\hspace{.5cm} x>r_0,\ y\in [0,1].
$$
Notice that Equation \eqref{hfsegunda} can be expressed as
$$
\begin{array}{c}
\vspace{.2cm}h''(x)=\phi_\H(x,h'(x)),\\
f''(x)=\phi_\F(x,f'(x)).
\end{array}
$$
As $\H(y)>\F(y)$, then is straightforward that
\begin{equation}\label{phihmayorf}
\phi_\H(x,y)>\phi_\F(x,y),\hspace{.5cm}\forall x>r_0,\ y\in [0,1].
\end{equation}
In this situation, the following inequality holds
$$
h''(x)=\phi_\H(x,h'(x))>\phi_\F(x,h'(x)).
$$
As $h'(x_0)>f'(x_0)$ and $f''(x)$ is a solution for the ODE $\phi_\F(x,f'(x))$, a classical comparison theorem for ODE's applied to Equation \eqref{phihmayorf} ensures us that $h'(x)>f'(x)$ for every $x>x_0$. 

The same proof works for the case of lower ends. Notice that in this situation, because the change of the orientation explained above, the mean curvatures are positives and thus the comparison of the ODE's in Equation \eqref{phihmayorf} now reads as $\phi_\H(x,y)<\phi_\F(x,y)$. Now the hypothesis $h'(x_0)<f'(x_0)$ and the comparison theorem for ODE's concludes the proof for lower ends. This completes the proof of Proposition \ref{monotoniaderivadas}.
\end{proof}

\begin{remark}
Suppose that $\H,F\in\c1$ satisfy $\H(y)>\F(y)$ for all $y\in (-1,1)$, and let be $r_0>0$. Then, we know that $\sig_\H(r_0)$ lies locally above $\sig_\F(r_0)$ near $r_0$. In particular, the functions $h_+,f_+$ defining the upper ends $\sig_\H^+(r_0)$ and $\sig_\F^+(r_0)$, respectively, satisfy $h_+(x)>f_+(x)$ and $h'_+(x)>f'_+(x)$, for $x\in (r_0,r_0+\varepsilon)$ where $\varepsilon>0$ is small enough. In virtue of Propositions \ref{comparisoncats} and \ref{monotoniaderivadas}, this behavior is fulfilled for every $x>r_0$, and not only in a neighborhood of $r_0$.

The same holds for the lower ends, after a change of the signs of the inequalities.
\end{remark}

\section{\large Comparison of the behavior at infinity of prescribed mean curvature catenoids}\label{sec3}
\vspace{-.5cm}
Once we have formulated some comparison results for the height and the derivative of $\H$-catenoids, we take care of the behavior at infinity of the $\H$-catenoids. First, we need to settle on the concept of when an $\H$-catenoid \emph{goes to infinity}.

\begin{defi}\label{defiboundedheight}
Let be $\hi$ and $r_0>0$, and consider the $\H$-catenoid $\sig_\H(r_0)$. Denote by $f_+$ and $f_-$ to the upper and lower height of $\sig_\H(r_0)$, respectively.
\begin{itemize}
\item[1.] We say that $\sig_\H(r_0)$ has \emph{unbounded upper height} (resp. \emph{bounded upper height}) if the function $f_+(x)$ is unbounded (resp. bounded).
\item[2.] We say that $\sig_\H(r_0)$ has \emph{unbounded lower height} (resp. \emph{bounded lower height}) if the function $f_-(x)$ is unbounded (resp. bounded).
\item[3.] If both $f_+(x)$ and  $f_-(x)$ are unbounded (resp. bounded), we will simply say that $\sig_\H(r_0)$ has \emph{unbounded height} (resp. \emph{bounded height}).
\end{itemize}
\end{defi}

The \emph{behavior at infinity} of an $\H$-catenoid $\sig_\H(r_0)$ is just the boundedness or unboundedness of its height functions. The following proposition proves that for a fixed $\H$, all the $\H$-catenoids $\sig_\H(r)$ have the same behavior at infinity.

\begin{pro}\label{mismocomportamientohcats}
Let be $\H\in\c1$ and $r_0>0$. Suppose that the $\H$-catenoid $\sig_\H(r_0)$ has unbounded (resp. bounded) upper height. Then, all the $\H$-catenoids $\{\sig_\H(r)\}_r$ have unbounded (resp. bounded) upper height. The same holds for the lower height.
\end{pro}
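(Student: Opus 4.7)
The key observation is that the ODE \eqref{ODEfsegunda} governing the upper height $f_+$ is invariant under vertical translation: $f_+$ itself does not appear, only $x$ and $f_+'$. Writing $p:=f_+'$, one reduces to a first-order equation $p'=\phi_\H(x,p)$ on $(r,\infty)$ with the singular initial condition $p(r^+)=+\infty$. Since $\H\in C^1$, $\phi_\H$ is $C^1$ in $p$ and solutions through interior points are unique, so for $r_0<r_1$ the solutions $p_{r_0},p_{r_1}$ cannot cross on their common domain. Evaluating near $x=r_1$, one has $p_{r_1}(r_1^+)=+\infty>p_{r_0}(r_1)$, so uniqueness forces $p_{r_1}>p_{r_0}$ throughout $(r_1,\infty)$.

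Because $\H(y)<0$ on $(-1,1)$ and $\H(\pm 1)=0$, direct inspection of $\phi_\H$ shows $p'<0$ whenever $p>0$, so each $p_r$ is strictly decreasing on $(r,\infty)$. A positive finite limit $L$ would give $\phi_\H(\infty,L)=2\H(\nu(L))(1+L^2)^{3/2}=0$, forcing $\nu(L)=1$ (the only zero of $\H$ the angle function can reach), i.e.\ $L=0$, a contradiction; hence $p_r(x)\to 0$ as $x\to\infty$. The upper height is then $f_r(\infty)=\int_r^\infty p_r(x)\,dx$, and the catenoid has bounded (resp.\ unbounded) upper height iff this integral is finite (resp.\ infinite). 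Since $p_{r_0}<p_{r_1}$ on $(r_1,\infty)$ and $\int_{r_0}^{r_1}p_{r_0}=f_{r_0}(r_1)<\infty$, this yields one half of the proposition at once: if $\sig_\H(r_0)$ has unbounded upper height then so does $\sig_\H(r_1)$, and contrapositively if $\sig_\H(r_1)$ is bounded then so is $\sig_\H(r_0)$.

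The main obstacle is the reverse implication, since the one-sided estimate $p_{r_0}<p_{r_1}$ does not by itself bound $\int p_{r_1}$ above in terms of $\int p_{r_0}$. The plan is to close this by an asymptotic argument near the equilibrium $p=0$: the linearisation in $p$ of $\phi_\H$ at $p=0$ is the $r$-independent expression $-1/x$, so the leading-order decay of $p_r$ as $x\to\infty$ is determined by $\H$ alone and not by the necksize. A Gronwall-type estimate applied to the difference $p_{r_1}-p_{r_0}$ should show that this difference is integrable on $(r_1,\infty)$; hence $f_{r_1}(\infty)-f_{r_0}(\infty)$ is finite, so the two upper heights are simultaneously bounded or unbounded. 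The case $r_1<r_0$ is handled by the symmetric argument on the common domain $(r_0,\infty)$, where $p_{r_0}>p_{r_1}$ holds and the analogous reasoning applies.
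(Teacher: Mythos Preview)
Your non-crossing argument for the derivatives $p_r$ is correct and cleanly yields the easy direction: if $\Sigma_\H(r_0)$ has unbounded upper height, then so does $\Sigma_\H(r_1)$ for every $r_1>r_0$. The gap is in the reverse implication. You correctly compute that the linearisation of $\phi_\H$ at $p=0$ is $-1/x$, but this works \emph{against} your plan, not for it: if $q=p_{r_1}-p_{r_0}$ satisfies $q'\approx -q/x$, then $q\sim C/x$, which is \emph{not} integrable on $(r_1,\infty)$. Concretely, for $\H_\alpha(y)=-(1-y^2)^\alpha$ with $\alpha>1$ the paper itself shows (Claim~2 of Theorem~\ref{alfamayor1}) that $xf_r'(x)\to c(r)>0$; the constant $c(r)$ genuinely depends on the necksize (for the limiting minimal case one has $c(r)=r$), so $p_{r_1}-p_{r_0}\sim (c(r_1)-c(r_0))/x$ and the difference of heights is infinite. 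Thus the sentence ``a Gronwall-type estimate \ldots\ should show that this difference is integrable'' is not just unproved but false as stated. Your linearisation fixes only the decay \emph{rate}, not the multiplicative constant, and that constant is exactly what carries the necksize dependence.

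The paper avoids this difficulty by arguing geometrically rather than through the ODE. For $r_*<r_0$ it applies the homothety of ratio $\lambda=r_*/r_0<1$: this sends $\Sigma_\H(r_0)$ to $\Sigma_{(1/\lambda)\H}(r_*)$, and since $(1/\lambda)\H<\H$ pointwise (both are negative), Proposition~\ref{comparisoncats} forces $\Sigma_\H(r_*)$ to lie above the (still unbounded) rescaled catenoid. For $r_*>r_0$ the paper uses that the family $\{\Sigma_\H(r)\}_{r\to 0}$ converges to a double cover of the punctured plane (Proposition~\ref{doblerecubri}); a small-neck $\H$-catenoid then slides under $\Sigma_\H(r_*)$ inside a slab, and if $\Sigma_\H(r_*)$ were bounded one produces an interior tangency contradicting the maximum principle. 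Neither step attempts to control $\int(p_{r_1}-p_{r_0})$. If you want to salvage an ODE approach, the homothety trick can be recast analytically: setting $\tilde p(x)=p_{r_0}(x/\lambda)$ gives a solution of $\tilde p'=\phi_{(1/\lambda)\H}(x,\tilde p)$ with neck $r_*$, and since $\phi_{(1/\lambda)\H}<\phi_\H$ pointwise you can compare $\tilde p$ with $p_{r_*}$ directly; this replaces the failed Gronwall step.
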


\begin{proof}
As usual, we present the proof for the upper height, since the lower height case is proved in the same way.

We start our proof with the case that the $\H$-catenoid $\sig_\H(r_0)$ has unbounded upper height.

First, consider some $r_*<r_0$ and define $\lambda=r_*/r_0<1$. Consider the homothety $\Phi_\lambda:\r3\rightarrow\r3,\ \Phi(p)=\lambda p$ for all $p\in\r3$, and let us define $\lambda\sig_\H(r_0):=\Phi_\lambda(\sig_\H(r_0))$. The mean curvature $H_{\lambda\sig_\H(r_0)}$ of $\lambda\sig_\H(r_0)$ satisfies $H_{\lambda\sig_\H(r_0)}=1/\lambda H_{\sig_\H(r_0)}$, and thus the uniqueness of the Cauchy problem yields
$$
\lambda\sig_\H(r_0)=\sig_{\frac{1}{\lambda}\H}(r_*).
$$
Note that an homothety does not change the behavior at infinity of an $\H$-catenoid, since it only multiplies by $\lambda\neq 0$ its upper height.

Consider also the $\H$-catenoid $\sig_\H(r_*)$. Thus, both $\sig_\H(r_*)$ and $\sig_{1/\lambda\H}(r_*)$ are tangent along their waists, and because $\lambda<1$ Proposition \ref{comparisoncats} ensures us that $\sig_\H(r_*)$ lies above $\sig_{1/\lambda\H}(r_*)$. As $\sig_\H(r_0)$ was supposed to have unbounded upper height, then $\sig_\H(r_*)$ has to have also unbounded upper height. On the contrary, at a finite point the $\H$-catenoid $\sig_{1/\lambda\H}(r_*)$ would intersect the $\H$-catenoid $\sig_\H(r_*)$, which would yield to a contradiction with Proposition \ref{comparisoncats}, see Figure \ref{homotecia}.
\begin{figure}[H]
\centering
\includegraphics[width=.8\textwidth]{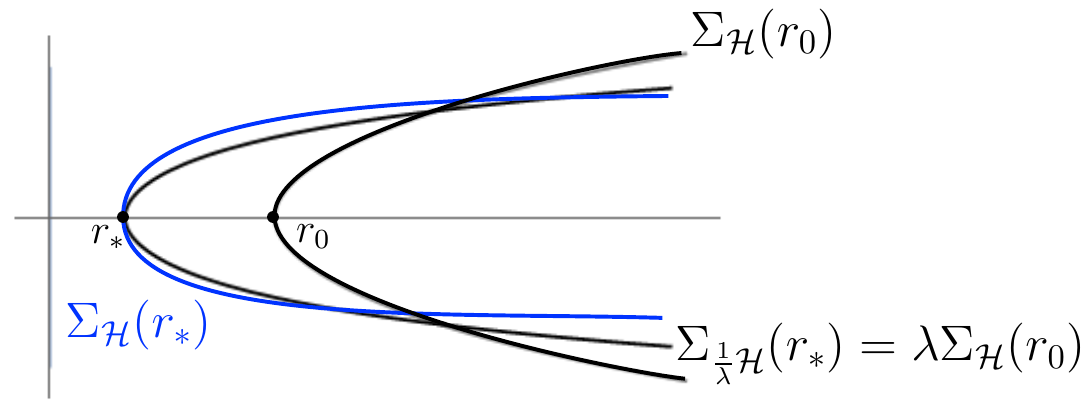}
\caption{The homothetical $\H$-catenoid $\lambda\sig_\H(r_0)$ has to stay below the $\H$-catenoid $\sig_\H(r_*)$.}
\label{homotecia}
\end{figure}

Suppose now that $r_*>r_0$. We will use the fact that all the behavior at infinity of the $\H$-catenoids $\{\sig_\H(r)\}_{r<r_0}$ agree with the behavior at infinity of $\sig_\H(r_0)$. When $r$ tends to zero, the sequence of $\H$-catenoids $\{\sig_\H(r)\}_{r}$ converges to a double covering of the plane $\{z=0\}$ minus the origin, see Proposition \ref{doblerecubri} in the Appendix. Thus, for $r_1$ close enough to zero, the $\H$-catenoid $\sig_\H(r_1)$ would intersect the $\H$-catenoid $\sig_\H(r_*)$ at the boundary of a slab $\mathcal{S}$, and such that $\sig_\H(r_1)$ lies below $\sig_\H(r_*)$ inside $\mathcal{S}$. Moving $\sig_\H(r_1)\cap\mathcal{S}$ upwards and downwards as in the proof of Proposition \ref{comparisoncats} we arrive to a contradiction with the maximum principle, see Figure \ref{homotecia1}. Thus, $\sig_\H(r_*)$ has to have unbounded upper height as well. 
\begin{figure}[H]
\centering
\includegraphics[width=.7\textwidth]{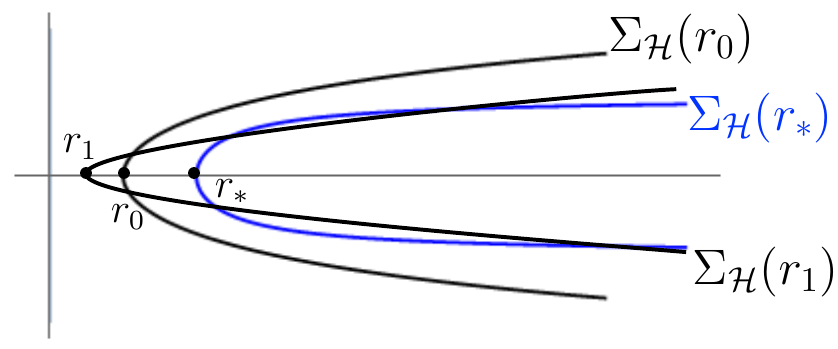}
\caption{The $\H$-catenoid $\sig_\H(r_*)$ would be intersected by an $\H$-catenoid $\sig_\H(r_1)$ for some $r_1$ small enough, contradicting the maximum principle.}
\label{homotecia1}
\end{figure}

From the discussions made above we may also conclude that if $\sig_\H(r_0)$ have bounded upper height, then all the $\H$-catenoids $\{\sig_\H(r)\}_{r>0}$ have bounded upper height. 

The same arguments work for the lower height, and thus Proposition \ref{mismocomportamientohcats} is proved.
\end{proof}

Let be $\hi$ and $r_0>0$. The phase plane study made in \cite{BGM} ensures us that the mean curvature of each $\H$-catenoid vanishes near infinity, and thus the angle function at the upper (resp. lower) end has to converge to $1$ (resp. to $-1$). This fact suggests us that the study of the $\H$-catenoids near infinity is closely related to the behavior of the prescribed function $\H$ at the points $y=\pm 1$.

First, we give a definition concerning two prescribed functions with the same limit behavior.

\begin{defi}\label{compasintontico}
Let be $\H,\F\in\c1$. 
\begin{itemize}
\item[1. ]We will say that $\H$ and $\F$ have the same \emph{behavior at $y=1$} if and only if
\begin{equation}\label{limy1}
\lim_{y\rightarrow 1}\frac{\H(y)}{\F(y)}=C_1,
\end{equation}
where $C_1$ is a nonzero constant. This condition will be denoted by $\H\sim_1\F$.

\item[2.] We will say that $\H$ and $\F$ have the same \emph{behavior at $y=-1$} if and only if
\begin{equation}\label{limym1}
\lim_{y\rightarrow -1}\frac{\H(y)}{\F(y)}=C_2,
\end{equation}
where $C_2$ is a nonzero constant. This condition will be denoted by $\H\sim_{-1}\F$.

\item[3.] If Equations \eqref{limy1} and \eqref{limym1} hold, we will say that $\H$ and $\F$ have the same \emph{behavior at $y=\pm 1$}. This condition will be denoted by $\H\sim\F$
\end{itemize} 
\end{defi}
A straightforward consequence from this definition is that each relation $\sim_1,\sim_{-1}$ and $\sim$ is an equivalence relation in the set of functions $\c1$. 

The fact that two functions $\H$ and $\F$ have the same behavior at either $\pm 1$ or both, is equivalent to the following: for each $\varepsilon\in (0,2)$ there exist nonzero constants $M,M'$ (depending on $\varepsilon$) such that
\begin{equation}\label{acotarconconstantes}
\begin{array}{llcl}
\mathrm{If}\ \vspace{.25cm}\H\sim_1\F, &\mathrm{then} & M\H(y)\leq\F(y)\leq M'\H(y),&\hspace{.5cm} \forall y\in [-1+\varepsilon,1],\\
\mathrm{If}\ \vspace{.25cm}\H\sim_{-1}\F, &\mathrm{then} & M\H(y)\leq\F(y)\leq M'\H(y),&\hspace{.5cm} \forall y\in [-1,1-\varepsilon],\\
\mathrm{If}\ \vspace{.25cm}\H\sim\F, &\mathrm{then}  & M\H(y)\leq\F(y)\leq M'\H(y),&\hspace{.5cm} \forall y\in [-1,1].
\end{array}
\end{equation}

The following proposition reveals that two functions with the same behavior at either $y=\pm 1$ generate classes of prescribed mean curvature catenoids with the same behavior at infinity.

\begin{pro}\label{hfmismocomportamiento}
Let be $\H,\F\in\c1$ and $r_0>0$, suppose that $\H\sim_1\F$ and consider the prescribed mean curvature catenoids $\sig_\H(r_0)$ and $\sig_\F(r_0)$. Then, the catenoid $\sig_\H(r_0)$ has bounded (resp. unbounded) upper height if and only if $\sig_\F(r_0)$ has bounded (resp. unbounded) upper height.

The same holds when $\H\sim_{-1}\F$ for their lower heights.
\end{pro}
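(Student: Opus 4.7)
The plan is to sandwich the upper height of $\sig_\F(r_0)$ between those of $\sig_{M\H}(r_0)$ and $\sig_{M'\H}(r_0)$ for two suitably chosen positive constants $M>M'$, and to observe that both sandwiching catenoids share the behavior at infinity of $\sig_\H(r_0)$.

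The first step is a localization observation: the ODE \eqref{ODEfsegunda} for the upper height $f_+$ only sees the restriction of the prescribed function to $[0,1]$, because the angle function $\nu_{f_+}(x)=1/\sqrt{1+f_+'(x)^2}$ of an upper end takes values in $[0,1]$. Consequently, the tangency-plus-translation argument used in the proof of Proposition \ref{comparisoncats} still goes through to compare upper heights as soon as the strict inequality between the two prescribed functions holds on $[0,1)$, since every interior tangency $p_1$ produced by that argument lies in the upper end and therefore has angle function $\nu_{p_1}\in[0,1)$, which is where the mean curvature comparison principle is invoked.

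Next, from $\H\sim_1\F$ and \eqref{acotarconconstantes} with $\varepsilon=1$ I obtain positive constants $M\geq M'$ with $M\H(y)\leq\F(y)\leq M'\H(y)$ on $[0,1]$. Replacing $M$ by $M+\delta$ and $M'$ by $M'-\delta$ for $\delta>0$ small (so that both remain positive) turns these into strict inequalities on $[0,1)$, with equality only at $y=1$ where all three functions vanish. The homothety identity $\lambda\sig_\H(r_0)=\sig_{\H/\lambda}(\lambda r_0)$ used in the proof of Proposition \ref{mismocomportamientohcats} gives $\sig_{M\H}(r_0)=(1/M)\sig_\H(Mr_0)$ and $\sig_{M'\H}(r_0)=(1/M')\sig_\H(M'r_0)$, and Proposition \ref{mismocomportamientohcats} itself then ensures that both $\sig_{M\H}(r_0)$ and $\sig_{M'\H}(r_0)$ share the upper-height behavior at infinity of $\sig_\H(r_0)$.

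Applying the localized version of Proposition \ref{comparisoncats} to the three prescribed functions $M\H$, $\F$, $M'\H$ yields
\[
f_{M\H}^+(x)<f_\F^+(x)<f_{M'\H}^+(x),\qquad x>r_0,
\]
and the equivalence follows: bounded upper height of $\sig_\H(r_0)$ transfers to $\sig_{M'\H}(r_0)$ and then to $\sig_\F(r_0)$ via the right-hand inequality, while unbounded upper height transfers to $\sig_{M\H}(r_0)$ and then to $\sig_\F(r_0)$ via the left-hand one. The lower-height statement under $\H\sim_{-1}\F$ is proved identically, using the range $[-1,0]$ of the angle function on the lower ends. The point most likely to require care is the localization itself: one has to verify that the vertical-translation argument in the proof of Proposition \ref{comparisoncats} only invokes the prescribed functions at values of the angle function actually attained on the ends being compared, so that the partial control provided by $\H\sim_1\F$ (respectively $\H\sim_{-1}\F$) is enough.
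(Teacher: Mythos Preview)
Your argument is correct and follows essentially the same route as the paper: sandwich one catenoid between two constant multiples of the other via Proposition~\ref{comparisoncats}, then use the homothety identity together with Proposition~\ref{mismocomportamientohcats} to transfer the behavior at infinity. The only cosmetic difference is that the paper puts the constants on $\F$ rather than on $\H$; you are in fact more careful than the paper in two places, namely in perturbing $M,M'$ to force strict inequalities on $[0,1)$ and in noting explicitly that the comparison of upper ends only requires control of the prescribed functions on $[0,1]$.
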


\begin{proof}
We present the proof when $\H\sim_1\F$, since the case for the relation $\sim_{-1}$ is analogous.

Because $\H\sim_1\F$, Equation \eqref{acotarconconstantes} ensures us the existence of nonzero constants $M,M'$ such that
$$
M\F(y)\leq\H(y)\leq M'\F(y),\hspace{.5cm} \forall y\in [0,1].
$$

Consider the $\H$-catenoid $\sig_\H(r_0)$, and suppose first that $\sig_\H(r_0)$ has unbounded upper height. Now consider the catenoid $\sig_{M'\F}(r_0)$. Both $\sig_\H(r_0)$ and $\sig_{M'\F}(r_0)$ are tangent along their waists, and because $M'\F(y)>\H(y)$ for all $y\in [0,1)$, Proposition \ref{comparisoncats} ensures us that $\sig_{M'\F}(r_0)$ lies above $\sig_\H(r_0)$ always. As the upper height of $\sig_\H(r_0)$ is unbounded, the upper height of $\sig_{M'\F}(r_0)$ has to be also unbounded. Because $\sig_{M'\F}(r_0)$ and $\sig_\F(r_0)$ have the same behavior at infinity, we conclude that $\sig_\F(r_0)$ has also unbounded upper height. 

If the upper height of $\sig_\H(r_0)$ is bounded, we compare $\sig_\H(r_0)$ and the catenoid $\sig_{M\F}(r_0)$. 

The same comparison arguments hold for the case when $\H\sim_{-1}\F$. This proves Proposition \ref{hfmismocomportamiento}.
\end{proof}
Combining Propositions \ref{mismocomportamientohcats} and \ref{hfmismocomportamiento}, we state the following theorem 

\begin{teo}\label{claseequiv}
Let be $\H,\F\in\c1$, $r_0>0$ and consider the $\H$-catenoid $\sig_\H(r_0)$. Then,
\begin{itemize}
\item[1.] If $\H\sim_1\F$, then the upper end of each $\F$-catenoid has the same behavior at infinity as the upper end of $\sig_\H(r_0)$.
\item[2.] If $\H\sim_{-1}\F$, then the lower end of each $\F$-catenoid has the same behavior at infinity as the lower end of $\sig_\H(r_0)$ .
\item[3.] If $\H\sim\F$, then both ends of each $\F$-catenoid have the same behavior at infinity as the ends of $\sig_\H(r_0)$.
\end{itemize}
\end{teo}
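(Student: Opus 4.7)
The plan is to treat the theorem as essentially a transitivity/chaining argument that glues together the two earlier propositions, so no genuinely new geometric input is required. Let me fix notation: call $\mathcal{B}^+(\H)$ (resp. $\mathcal{B}^-(\H)$) the common upper (resp. lower) behavior at infinity of the family $\{\sig_\H(r)\}_{r>0}$. This is well-defined exactly because Proposition \ref{mismocomportamientohcats} tells us that the upper (resp. lower) height is bounded for one value of $r$ if and only if it is bounded for every value of $r$. Thus the notion ``the upper end of $\sig_\H(r)$ is bounded'' depends only on $\H$, not on $r$.

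For part (1), I would argue as follows. Fix any $r>0$ and compare the $\F$-catenoid $\sig_\F(r)$ with $\sig_\H(r_0)$ in two steps. First, Proposition \ref{mismocomportamientohcats} applied to the family $\{\sig_\F(\rho)\}_\rho$ says that $\sig_\F(r)$ has the same upper behavior as $\sig_\F(r_0)$. Second, since $\H\sim_1\F$, Proposition \ref{hfmismocomportamiento} applied to the pair $(\H,\F)$ with necksize $r_0$ says that $\sig_\F(r_0)$ has the same upper behavior as $\sig_\H(r_0)$. Chaining these two equivalences yields that $\sig_\F(r)$ has the same upper behavior as $\sig_\H(r_0)$ for every $r>0$, which is exactly the statement of part (1).

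Part (2) is verbatim the same argument with ``upper'' replaced by ``lower'' and the relation $\sim_1$ replaced by $\sim_{-1}$, using the corresponding halves of Propositions \ref{mismocomportamientohcats} and \ref{hfmismocomportamiento} (both propositions are stated symmetrically in the upper/lower cases). Part (3) then follows immediately by applying parts (1) and (2) simultaneously, since by Definition \ref{compasintontico} the condition $\H\sim\F$ is precisely the conjunction of $\H\sim_1\F$ and $\H\sim_{-1}\F$; the upper ends of $\sig_\F(r)$ and $\sig_\H(r_0)$ agree in behavior by part (1), and the lower ends agree by part (2).

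There is no real obstacle here beyond being careful to invoke the right half (upper vs. lower) of each preceding proposition and to note that the equivalence classes produced by $\sim_1$, $\sim_{-1}$ and $\sim$ are transitive; the latter is an immediate consequence of the multiplicative form of the defining limits in Equation \eqref{limy1} and \eqref{limym1}. The proof can therefore be written in a few lines, essentially as a diagram-chase of equivalences, with no recourse to the ODE \eqref{ODEfsegunda} or to the maximum principle beyond what is already encapsulated in the two propositions being combined.
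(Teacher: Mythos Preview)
Your proposal is correct and matches the paper's approach exactly: the paper presents Theorem \ref{claseequiv} without a separate proof, introducing it simply as the statement obtained by ``combining Propositions \ref{mismocomportamientohcats} and \ref{hfmismocomportamiento}''. Your two-step chaining argument (first pass from $\sig_\F(r)$ to $\sig_\F(r_0)$ via Proposition \ref{mismocomportamientohcats}, then from $\sig_\F(r_0)$ to $\sig_\H(r_0)$ via Proposition \ref{hfmismocomportamiento}) is precisely that combination made explicit.
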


\section{\large The behavior at infinity of some prescribed mean curvature catenoids}\label{sec4}
\vspace{-.5cm}
The arbitrariness of the prescribed function in Equation \eqref{ODEfsegunda} disables us to explicitly study the behavior at infinity for an arbitrary choice $\hi$. However, the study carried on in Section \ref{sec3} ensures us that the knowledge of the behavior at infinity for some family of $\H$-catenoids automatically reveals the behavior at infinity of the family of $\mathcal{F}$-catenoids, for each $\F\in\c1$ such that $\F\sim\H$.

Motivated by this fact, in this section we will study the family of $\H$-catenoids for some concrete choices of the prescribed function $\H$. Specifically, we will study the 1-parameter family of $\c1$ functions $\H_\alpha(y):=-(1-y^2)^\alpha,\ \alpha>1$. The theorem that we prove in this section is the following:

\begin{teo}\label{alfamayor1}
Let be $\alpha>1$ and consider the function $\H_\alpha(y)=-(1-y^2)^\alpha$. Then, for each $\F\in\c1$ we have:
\begin{itemize}
\item[1.] If $\F\sim_1\H_\alpha$, the $\F$-catenoids $\{\sig_\F(r)\}_{r>0}$ have unbounded upper height.
\item[2.] If $\F\sim_{-1}\H_\alpha$, the $\F$-catenoids $\{\sig_\F(r)\}_{r>0}$ have unbounded lower height.
\item[3.] If $\F\sim\H_\alpha$, the $\F$-catenoids $\{\sig_\F(r)\}_{r>0}$ have unbounded both upper and lower height.
\end{itemize}
\end{teo}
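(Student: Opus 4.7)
The plan is to reduce all three assertions to a single concrete computation about the $\H_\alpha$-catenoid. By Theorem \ref{claseequiv}, it is enough to show that for some $r_0>0$ the catenoid $\sig_{\H_\alpha}(r_0)$ has unbounded upper height (which gives (1)) and unbounded lower height (which gives (2) and, jointly with (1), also (3)); and by Proposition \ref{mismocomportamientohcats} it suffices to verify this for one single necksize $r_0>0$. Finally, since $\H_\alpha$ is even in $y$, the catenoid $\sig_{\H_\alpha}(r_0)$ is symmetric under the reflection across the horizontal plane containing its waist, so unbounded upper height automatically implies unbounded lower height. Thus the entire theorem reduces to showing that, for some fixed $r_0>0$, the upper height $f:=f_+$ of $\sig_{\H_\alpha}(r_0)$ is unbounded.

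The key step is to introduce the auxiliary quantity
\[
\phi(x) := \frac{x\,f'(x)}{\sqrt{1+f'(x)^2}} \;=\; x\sqrt{1-\nu_f(x)^2}.
\]
Multiplying \eqref{condicionHsup} by $x$ shows that its right-hand side is precisely $\phi'(x)$, so $\phi'(x)=2x\,\H_\alpha(\nu_f(x))$. Substituting $\H_\alpha(y)=-(1-y^2)^\alpha$ and using the identity $1-\nu_f^2=(\phi/x)^2$ (which is immediate from the definition of $\phi$) collapses the mean curvature equation to the autonomous, explicit ODE
\[
\phi'(x) \;=\; -\,\frac{2\,\phi(x)^{2\alpha}}{x^{2\alpha-1}}, \qquad \lim_{x\to r_0^+}\phi(x)=r_0,
\]
where the initial value reflects the fact that the tangent plane at the waist is vertical, i.e. $f'(x)\to+\infty$ as $x\to r_0^+$, so that $f'/\sqrt{1+f'^2}\to 1$.

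Separation of variables followed by integration from $r_0$ to $x$ then produces a closed-form expression for $\phi(x)^{1-2\alpha}$ as a linear combination of $r_0^{1-2\alpha}$, $r_0^{2-2\alpha}$ and $x^{2-2\alpha}$. Here the hypothesis $\alpha>1$ is decisive: since $2-2\alpha<0$, one has $x^{2-2\alpha}\to 0$ as $x\to\infty$, and so $\phi(x)^{1-2\alpha}$ tends to a strictly positive constant; because $1-2\alpha<0$, this forces $\phi$ to decrease monotonically from $r_0$ to a strictly positive limit $\phi_\infty>0$. The resulting bound $\phi(x)\geq\phi_\infty$ yields
\[
f'(x) \;=\; \frac{\sqrt{1-\nu_f^2}}{\nu_f} \;\geq\; \sqrt{1-\nu_f^2} \;=\; \frac{\phi(x)}{x} \;\geq\; \frac{\phi_\infty}{x},
\]
and integrating from some fixed $x_0>r_0$ gives $f(x)\geq f(x_0)+\phi_\infty\log(x/x_0)\to\infty$, completing the proof.

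I expect the main difficulty to lie in the reformulation carried out in the second step: one first has to recognize that, for any prescribed function $\H$, the expression $2x\,\H(\nu)$ in \eqref{condicionHsup} is the total derivative of $xf'/\sqrt{1+f'^2}$, and then that the very specific power shape $\H_\alpha(y)=-(1-y^2)^\alpha$ combines with the identity $1-\nu^2=(\phi/x)^2$ to remove the angle variable altogether and produce an ODE in $\phi$ and $x$ alone. Once this autonomous equation is in hand the rest is a routine quadrature, and it also becomes transparent that $\alpha=1$ is the critical exponent at which the argument degenerates, since the corresponding integral $\int x^{1-2\alpha}\,dx$ diverges and the quantity $\phi$ could then decay to zero.
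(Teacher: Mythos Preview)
Your proof is correct and takes a genuinely different route from the paper's.

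Both arguments reduce to showing that the upper height $f$ of a single $\H_\alpha$-catenoid grows logarithmically, and both ultimately hinge on the behavior of the quantity $\phi(x)=xf'(x)/\sqrt{1+f'(x)^2}$. The difference lies in how this behavior is established. The paper proceeds indirectly: it first compares with the minimal catenoid of the same necksize (Claim~1) to obtain the bound $f'(x)<r_0/\sqrt{x^2-r_0^2}$; this is then used to show that $\int_{r_0}^\infty f'(t)^{2\alpha-1}\,dt<\infty$, which via a differential inequality for $\log\bigl(f'/\sqrt{1+f'^2}\bigr)$ traps $\phi$ between two positive constants (Claim~2). A separate monotonicity computation for $xf'(x)$ then gives the limit $c_0>0$, and integration yields logarithmic growth (Claim~3).

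Your approach is more direct and more explicit. By recognizing that the right-hand side of \eqref{condicionHsup} multiplied by $x$ is exactly $\phi'(x)$, and that the specific power form of $\H_\alpha$ together with the identity $1-\nu_f^2=(\phi/x)^2$ closes the equation, you obtain the autonomous separable ODE $\phi'=-2\phi^{2\alpha}/x^{2\alpha-1}$ with $\phi(r_0^+)=r_0$. Its quadrature immediately gives $\phi\searrow\phi_\infty>0$, from which $f'(x)\ge\phi_\infty/x$ and the logarithmic divergence follow. This bypasses the comparison with the minimal catenoid and the intermediate integral estimates entirely, and also makes the role of the threshold $\alpha=1$ transparent at the level of the integral $\int x^{1-2\alpha}\,dx$. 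Your use of the evenness of $\H_\alpha$ to deduce the lower-end statement from the upper-end one is likewise a clean shortcut; the paper instead says the lower case is ``proved similarly''. The paper's inequality-based argument, on the other hand, is in spirit more robust and would adapt to prescribed functions that are only comparable to $\H_\alpha$ near $y=1$ without being exactly of power type.
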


\begin{proof}
We will present the proof of Item 1, since Items 2 and 3 are proved similarly. 

Fix some $\alpha>1$. We will prove Item 1 of Theorem \ref{alfamayor1} by showing that for some $r_0$, the $\H_\alpha$-catenoid $\sig_{\H_\alpha}(r_0)$ has unbounded upper height. Then, in virtue of Theorem \ref{claseequiv} the upper ends of all the $\F$-catenoids $\{\sig_\F(r)\}_{r>0}$ for $\F\sim_1{\H_\alpha}$ will have the same behavior at infinity as $\sig_{\H_\alpha}(r_0)$. 

The study of the behavior at infinity of the $\H_\alpha$-catenoid $\sig_{\H_\alpha}(r_0)$ will be done by proving several claims.

\textbf{\emph{Claim 1.}} \emph{Consider the upper end of the $\H_\alpha$-catenoid $\sig_{\H_\alpha}(r_0)$ parametrized as the graph of a function $f(x)$. Then,}
$$
f'(x)<\frac{r_0}{\sqrt{x^2-r_0^2}},\hspace{.5cm} \forall x>r_0.
$$

\emph{Proof of Claim 1.} 
It is known that the upper end of the minimal catenoid $\mathcal{C}(r_0)$ is parametrized by the function
$$
g(x)=r_0\log\left(\frac{x+\sqrt{x^2-r_0^2}}{r_0}\right),\hspace{.5cm} x>r_0.
$$

At distance $x=r_0$, both $\mathcal{C}(r_0)$ and $\sig_{\H_\alpha}(r_0)$ are tangent along their waists, where their unit normals agree. The mean curvature comparison principle ensures us that $\mathcal{C}(r_0)$ lies above $\sig_{\H_\alpha}(r_0)$ around $x=r_0$. In particular, Proposition \ref{monotoniaderivadas} ensures us that $g'(x)>f'(x)$ for all $x>r_0$. Thus, the bound
$$
f'(x)<\frac{r_0}{\sqrt{x^2-r_0^2}},\hspace{.5cm} \forall x>r_0,
$$
holds by just substituting the value of $g'(x)$. Hence, Claim 1 is proved.
\nopagebreak\hfill $\Box$\endtrivlist

We now derive an inequality involving the derivative of the function $f(x)$. First, observe that for the prescribed choices $\H_\alpha$ , the value of $\H_\alpha(\nu_f(x))$ is given by
$$
\H_\alpha(\nu_f(x))=-\frac{f'(x)^{2\alpha}}{(1+f'(x)^2)^\alpha}.
$$
From Equation \eqref{ODEfsegunda}, we obtain the following
$$
-f'(x)^{2\alpha-1}-\frac{1}{x}<\frac{f''(x)}{f'(x)(1+f'(x)^2)}<-\frac{1}{2} f'(x)^{2\alpha-1}-\frac{1}{x}.
$$

Integrating from $r_0$ to $x>r_0$, we obtain
$$
-\int_{r_0}^xf'(t)^{2\alpha-1}dt+\log\frac{r_0}{x}<\log\left(\frac{f'(x)}{\sqrt{1+f'(x)^2}}\frac{\sqrt{1+f'(r_0)^2}}{f'(r_0)}\right)<-\frac{1}{2}\int_{r_0}^xf'(t)^{2\alpha-1}dt+\log\frac{r_0}{x}.
$$
After taking exponentials and some operations yields
\begin{equation}\label{cotafprima}
e^{\displaystyle{-\int_{r_0}^xf'(t)^{2\alpha-1}dt}}<\frac{xf'(x)}{\sqrt{1+f'(x)^2}}\frac{\sqrt{1+f'(r_0)^2}}{r_0f'(r_0)}< e^{\displaystyle{-\frac{1}{2}\int_{r_0}^xf'(t)^{2\alpha-1}dt}}.
\end{equation}

\textbf{\emph{Claim 2.}} \emph{Let be $\alpha>1$ and $f(x)$ the function that defines the upper end of $\sig_{\H_\alpha}$. Then,}
$$
\lim_{x\rightarrow\infty}xf'(x)=c_0,
$$
where $c_0$ is a positive constant.

\emph{Proof of Claim 2.}
Lets analyze the bound on $f'(x)$ in Equation \eqref{cotafprima}. Suppose that the integral $\int_{r_0}^xf'(t)^{2\alpha-1}dt$ is finite when $x$ goes to infinity and that the limit $\lim_{x\rightarrow\infty}xf(x)$ exists. Then, $\lim_{x\rightarrow\infty}xf(x)$ is necessarily a nonzero constant $c_0>0$.

Thus, our goal in this claim is twofold: firstly, we have to prove that the integral $\int_{r_0}^xf'(t)^{2\alpha-1}dt$ is finite when $x$ tends to infinity; secondly, we have to prove that the limit $\lim_{x\rightarrow\infty}xf(x)$ exists.

In order to prove that $\int_{r_0}^\infty f'(t)^{2\alpha-1}dt$ is finite, we consider the minimal catenoid $\C(r_0)$. In Claim 1 we proved that
$$
f'(x)<\frac{r_0}{\sqrt{x^2-r_0^2}},\hspace{.5cm} \forall x>r_0,
$$
and by powering to the $2\alpha-1$ we arrive to
$$
\lim_{x\rightarrow\infty} f'(x)^{2\alpha-1}<\left(\frac{r_0}{\sqrt{x^2-r_0^2}}\right)^{2\alpha-1},\hspace{.5cm} \forall x>r_0.
$$
Integrating from $r_0$ to $x$ yields
\begin{equation}\label{eqineq}
\int_{r_0}^x f'(t)^{2\alpha-1}dt<\int_{r_0}^x\left(\frac{r_0}{\sqrt{t^2-r_0^2}}\right)^{2\alpha-1}dt.
\end{equation}
Because $\alpha>1$ we have $2\alpha-1>1$, and thus the right hand side of Equation \eqref{eqineq} is a finite integral when $x$ diverges to $\infty$. This implies
$$
\int_{r_0}^x f'(t)^{2\alpha-1}dt<\infty.
$$
By making $x$ tend to infinity in Equation \eqref{cotafprima} we conclude that $xf'(x)$ is bounded between two positive constants. 

Now we will prove that $\lim_{x\rightarrow\infty}xf'(x)$ exists as a straightforward consequence from the fact that $xf'(x)$ is a monotonous function. Indeed, the derivative $(xf'(x))'$ is
$$
\begin{array}{c}
\vspace{.5cm}(xf'(x))'=f'(x)+xf''(x)=f'(x)+x\left(1+f'(x)^2\right)\left(2\H(\nu_f)\sqrt{1+f'(x)^2}-\displaystyle{\frac{f'(x)}{x}}\right)=\\
=x\left(1+f'(x)^2\right)2\H_\alpha(\nu_f)\sqrt{1+f'(x)^2}-f'(x)^3<0,
\end{array}
$$
where we have used that $f''(x)$ satisfies the ODE that appears in Equation \eqref{ODEfsegunda} and that $\H$ is negative and $f'(x)$ is positive.

As $xf'(x)$ is a monotonous function which is bounded between two positive constants, its limit must be a positive number, say $c_0$. This concludes the proof of Claim 2.
\nopagebreak\hfill $\Box$\endtrivlist

\textbf{\emph{Claim 3.}} \emph{The upper height of the $\H$-catenoid $\sig_\H(r_0)$ is unbounded.}

\emph{Proof of Claim 3.}
From Claims 1 and 2 we know that there exists some positive constant such that
$$
xf'(x)=c_0+h(x),
$$
where $h(x)$ is a positive function satisfying $\lim_{x\rightarrow\infty}h(x)=0$. Integrating $f'(x)$ yields
$$
f(x)=f(r_0)+c_0\log\frac{x}{r_0}+\int_{r_0}^x\frac{h(t)}{t}dt.
$$
Thus, the function $f(x)$ is unbounded and so the upper height of $\sig_\H(r_0)$ is unbounded, concluding the proof of Claim 3.
\nopagebreak\hfill $\Box$\endtrivlist

Because $\sig_{\H_\alpha}(r_0)$ has unbounded upper height, this property also holds in the equivalence class of $\H_\alpha$ defined by the relation $\sim_1$, in virtue of Theorem \ref{claseequiv}. This concludes the proof of Theorem \ref{alfamayor1}.
\end{proof}

\section{\large Half-space theorems for properly immersed $\H$-surfaces}\label{sec5}
\vspace{-.5cm}
In this last section we will obtain half-space theorems for properly immersed $\Hss$, by exploiting the study carried on in the previous sections concerning the behavior at infinity of the prescribed mean curvature catenoids.

First of all, we shall introduce some previous notation. We will denote by $\{z=c_0;\ c_0\in\R\}$ to the horizontal plane at height $c_0$. The \emph{upper} (resp. lower) horizontal half-space determined by $\{z=c_0;\ c_0\in\R\}$ is the open subset $\{z>c_0;\ c_0\in\R\}$ (resp. $\{z<c_0;\ c_0\in\R\}$). We define also the closed hemispheres by $\overline{\mathbb{S}^2_+}:=\mathbb{S}^2\cap\{z\geq 0\}$ and $\overline{\mathbb{S}^2_-}:=\mathbb{S}^2\cap\{z\leq 0\}$.

The main theorem of this paper is the following:

\begin{teo}\label{half-spaceth}
Let be $\H\in C^1(\mathbb{S}^2)$ and $M$ a connected, properly immersed, nonplanar $\H$-surface in $\r3$.
\begin{itemize}
\item[1.] If there exists $\F\in\c1$ such that $\H(x)\geq\F(\langle x,e_3\rangle)$ for every $x\in\overline{\mathbb{S}^2_+}$, and
\begin{equation}\label{igualnorte}
\lim_{y\rightarrow 1}\frac{\F(y)}{-(1-y^2)^\alpha}=C_1\neq 0
\end{equation}
holds for some $\alpha>1$, then $M$ cannot be contained in a lower half-space $\{z\leq c_0;\ c_0\in\R\}$.
\item[2.] If there exists $\F\in\c1$ such that $\H(x)\geq\F(\langle x,e_3\rangle)$ for every $x\in\overline{\mathbb{S}^2_-}$, and
\begin{equation}\label{igualsur}
\lim_{y\rightarrow -1}\frac{\F(y)}{-(1-y^2)^\beta}=C_2\neq 0
\end{equation}
holds for some $\beta>1$, then $M$ cannot be contained in an upper half-space $\{z\geq c_0;\ c_0\in\R\}$.
\item[3.] If Items 1 and 2 hold for some $\F\in\c1$ and some $\alpha,\beta>1$, then $M$ cannot be contained in any horizontal half-space.
\end{itemize}
\end{teo}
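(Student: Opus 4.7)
The plan is to adapt the Hoffman--Meeks half-space argument using a slight perturbation of the prescribed function $\F$ so as to deploy $\F$-catenoids as upper barriers for $M$. I will sketch Item~1; Items~2 and~3 will follow by entirely analogous arguments, applied to the lower ends of catenoids and by their combination.

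Argue by contradiction: assume $M\subset\{z\le c_0\}$ and translate vertically so that $c_0=0$. First I would rule out $M\cap\{z=0\}\neq\emptyset$. If such a tangency point $p$ existed, $M$ would lie locally below the plane $\{z=0\}$ with matching upward normal $e_3$, so Lemma~\ref{meancurvprinc} would yield $\H(e_3)\le 0$, while the hypothesis of Item~1 forces $\H(e_3)\ge \F(1)=0$; hence $\H(e_3)=0$, which makes the plane an $\H$-surface around $p$. Lemma~\ref{ppiomax} then gives $M=\{z=0\}$ in a neighborhood of $p$, and a standard open-closed argument using connectedness of $M$ extends this to $M=\{z=0\}$ globally, contradicting nonplanarity. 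Therefore $M\subset\{z<0\}$, and after replacing $c_0$ by $\sup_M z$ we may further assume $\sup_M z=0$; by properness there are points $p_n\in M$ with $z(p_n)\to 0^-$ and $|p_n^h|\to\infty$, where $p_n^h$ denotes the horizontal projection of $p_n$.

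Next, fix a small $\epsilon>0$ and set $\F_\epsilon:=(1+\epsilon)\F$. Since $\F<0$ on $(-1,1)$ and $\F(\pm 1)=0$, the function $\F_\epsilon$ lies in $\c1$, satisfies $\F_\epsilon(y)<\F(y)$ strictly for every $y\in(-1,1)$, and obeys $\F_\epsilon\sim_1\F\sim_1\H_\alpha$; Theorem~\ref{alfamayor1} then guarantees that every $\F_\epsilon$-catenoid has unbounded upper height. Now take the upper end $\sig_{\F_\epsilon}^+(r)$ with small necksize $r>0$, translated so that its waist sits at $(q^h,a)\in\R^3$; this is a radial graph $z=a+f_+(|x-q^h|)$ over $\{|x-q^h|\ge r\}$. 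For $a\ge 0$ this barrier lies in $\{z\ge 0\}$ and is disjoint from $M\subset\{z<0\}$. On the other hand, choosing $q^h:=p_n^h+2r\hat{e}$ for a fixed unit vector $\hat{e}$ and $a:=z(p_n)-f_+(2r)<0$ places the point $p_n\in M$ exactly on the graph, so the barrier meets $M$ for some values of $a$. Decreasing $a$ continuously from $+\infty$, together with the properness of $M$ (and a generic choice of $q^h$ to keep the first tangency away from the waist circle, or else passing to the full catenoid $\sig_{\F_\epsilon}(r)$ whose waist is an interior point), yields a first value $a_*\in(-\infty,0]$ at which the upper end touches $M$ tangentially at an interior point $p_*$, with shared upward normal $\eta$, angle $\nu:=\langle\eta,e_3\rangle\in[0,1)$, and the upper end locally above $M$. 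Lemma~\ref{meancurvprinc} then gives
\[
\F_\epsilon(\nu)=H_{\sig_{\F_\epsilon}^+}(p_*)\ge H_M(p_*)=\H(\eta)\ge \F(\nu),
\]
where the last inequality is the hypothesis of Item~1. But $\F_\epsilon(\nu)=(1+\epsilon)\F(\nu)<\F(\nu)$ because $\F(\nu)<0$, a contradiction.

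The main obstacle is making the sliding step precise: as $a$ decreases one must confirm that a true first tangential contact occurs and is interior to the upper end rather than on the waist circle. The crux is that the upper ends of $\F_\epsilon$-catenoids are unbounded (by Theorem~\ref{alfamayor1} applied to $\F_\epsilon\sim_1\H_\alpha$ with $\alpha>1$), which prevents the barrier from escaping vertically past $M$ and forces the critical value $a_*$ to exist; this is exactly where the asymptotic condition $\F\sim_1\H_\alpha$ enters. Item~2 is proved identically after swapping the upper hemisphere, the upper end, and the relation $\sim_1$ for their lower-hemisphere analogues and sliding the lower end of an $\F_\epsilon$-catenoid upward from below, while Item~3 is the conjunction of Items~1 and~2.
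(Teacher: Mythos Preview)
Your $\F_\epsilon$-perturbation is a sensible device: it gives the strict inequality $\F_\epsilon(\nu)<\F(\nu)$ and hence a clean pointwise contradiction at the contact, whereas the paper works directly with $\F$ and, taken literally, only gets $H_{\sig_\F^+}\le H_M$ from the hypothesis together with $H_{\sig_\F^+}\ge H_M$ from Lemma~\ref{meancurvprinc}. There is, however, a genuine gap. At the first tangential contact $p_*$ you assert that $M$ shares the \emph{upward} normal of the catenoid end, but nothing forces this: when a barrier descends onto a proper surface from above, the tangent planes agree at the first touch while the unit normals may well be opposite. If $\eta_M(p_*)=-\eta_{\sig}(p_*)$ then $\eta_M(p_*)$ lies in the open lower hemisphere, and the hypothesis $\H(x)\ge\F(\langle x,e_3\rangle)$ on $\overline{\mathbb{S}^2_+}$ tells you nothing about $\H(\eta_M(p_*))$; the comparison principle in that case only yields $\H(\eta_M(p_*))\ge -\F_\epsilon(\nu)>0$, which is perfectly compatible with all standing assumptions. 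The same objection applies to your opening step at the plane: if $M$ touches $\{z=0\}$ with normal $-e_3$ you cannot invoke Lemma~\ref{ppiomax} as written. The paper's proof handles this by opening with the additional assumption that $\H\le 0$ on all of $\mathbb{S}^2$ and vanishes only at the north pole; with that in hand the opposite-orientation case is dispatched in two lines by reversing the orientation of $M$. You must either adopt a comparable global sign hypothesis on $\H$ or argue separately that first contact cannot occur with opposite normals, and the latter is simply false in general (think of a downward-opening cap touched from above).

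A secondary difference concerns how the barriers are deployed. You fix the necksize $r$ and a horizontal center $q^h$ and slide vertically, which leaves the waist floating and makes the interior-contact claim delicate; neither a ``generic $q^h$'' nor passing to the full catenoid resolves this, since the full catenoid's lower end (whose height need not even be unbounded under the $\sim_1$ hypothesis alone) has Gauss image in the lower hemisphere and reintroduces the orientation problem above. The paper instead fixes the axis through the origin, uses properness to find a ball $B(\textbf{o},R_0)$ disjoint from $M$, translates $M$ slightly upward so it still misses that ball, and then lets $r\to 0$: all the waists of $\sig_\F^+(r)$ with $r<R_0$ sit inside $B(\textbf{o},R_0)$, so any first contact is automatically interior, and Proposition~\ref{doblerecubri} (convergence of the upper ends to the punctured plane as $r\to 0$) forces a contact to occur.
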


\begin{proof}
The proof of this theorem is based on the original ideas firstly introduced by Hoffman and Meeks \cite{HoMe}.

First, suppose that $\H\in C^1(\mathbb{S}^2)$ is a negative function only vanishing at the north pole, and suppose that Item 1 holds for some $\F\in\c1$ and $\alpha>1$. 

Arguing by contradiction, suppose that $M$ is a connected, properly immersed, nonplanar $\H$-surface contained in a half-space $\{z\leq c_0;\ c_0\in\R\}$. After a vertical translation we can suppose that $M$ is contained in the lower half-space determined by the plane $\Pi:=\{z=0\}$, but is not contained in any $\{z\leq -\varepsilon;\ \varepsilon>0\}$. First, notice that $M$ cannot intersect the plane $\Pi$. Indeed, if such intersection exists we would be able to find a point $p\in M\cap\Pi$, which is necessarily an interior tangent point. Denoting by $\eta_\Pi\equiv e_3$ and $\eta_M$ to the unit normals of $\Pi$ and $M$ respectively, then one of the following two cases must occur:
\begin{itemize}
\item[1.] At $p$ we have $(\eta_\Pi)_p=e_3=(\eta_M)_p$. Thus, both $M$ and $\Pi$ are $\Hss$ for the same prescribed function $\H$, and $\Pi$ lies locally above $M$ at $p$. The maximum principle \ref{ppiomax} ensures us that $M=\Pi$, contradicting the fact that $M$ is nonplanar.

\item[2.] At $p$ we have $(\eta_\Pi)_p=e_3=-(\eta_M)_p$, i.e. their orientations are opposite at $p$. Recall that $M$ is an $\Hs$ for the orientation induced by $\eta_M$, and that $\H$ is a negative function. In particular $H_M(p)\leq 0$, and by the previous item it has to be $H_M(p)<0$. Changing the orientation of $M$ we would obtain a $-\H$-surface with the opposite orientation $-\eta_M$. Thus, at $p$ we would have that $\Pi$ lies locally above $M$, and with this orientation $H_M(p)>0$. This is now a contradiction with the mean curvature comparison principle \ref{meancurvprinc}, since $\Pi$ is a planar surface that lies above $M$ around $p$ and $M$ has positive mean curvature at $p$ w.r.t. the orientation induced by $-\eta_M$.
\end{itemize}
In any case we have that $M\cap\Pi=\varnothing$, and thus $M$ is strictly contained in the half-space $\{z<0\}$.

Because $M$ is a proper surface which does not intersect the plane $\Pi$, the origin $\textbf{o}$ cannot be an accumulation point of $M$ and thus there exists an Euclidean ball $B(\textbf{o},R_0),\ R_0>0$ which is disjoint from $M$.

Now consider the family of $\F$-catenoids $\{\sig_\F(r)\}_{r>0}$. In virtue of Equation \eqref{igualnorte}, Theorem \ref{claseequiv} and Theorem \ref{alfamayor1}, we ensure that the family of $\F$-catenoids have unbounded upper height. In particular, the upper end of $\sig_\F(r)$, which we will denote by $\sig_\F^+(r)$, is a strictly concave graph with unbounded height w.r.t. the plane $\Pi$, and this holds for every $r>0$. Notice that for every $r<R_0$ we have $\left(\sig_\F^+(r)\cap\Pi\right)\subset B(\textbf{o},R_0)$, and in particular $\partial\sig_\F^+(r)\subset B(\textbf{o},R_0)$, for every $r<R_0$.

Again, as $M$ is proper there exists $\varepsilon>0$ small enough such that the vertical translation $M^+:=M+\varepsilon(0,0,1)$ does not intersect the ball $B(\textbf{o},R_0)$. From its definition it is clear that $M^+$ lies below the half-space $\{z<\varepsilon\}$.

Now consider the family of upper ends $\{\sig_\F^+(r)\}_{R_0>r>0}$. The boundary of each $\sig_\F^+(r_0)$ is contained in $B(\textbf{o},R_0)$, and thus they do not intersect $M^+$. Moreover, if the parameter $r$ converges to zero, the upper ends $\sig_\F^+(r)$ converge on compact sets to a covering of the plane $\{z=0\}$ minus the origin $\textbf{o}$, see Proposition \ref{doblerecubri}. By continuity and because $\sig_\F^+(r_0)$ has unbounded height w.r.t. the plane $\Pi$, there has to exist a finite, first interior contact point $p_0$ between some $\sig_\F^+(r_0),\ r_0>0$ and $M^+$.
\begin{figure}[H]
\centering
\includegraphics[width=.8\textwidth]{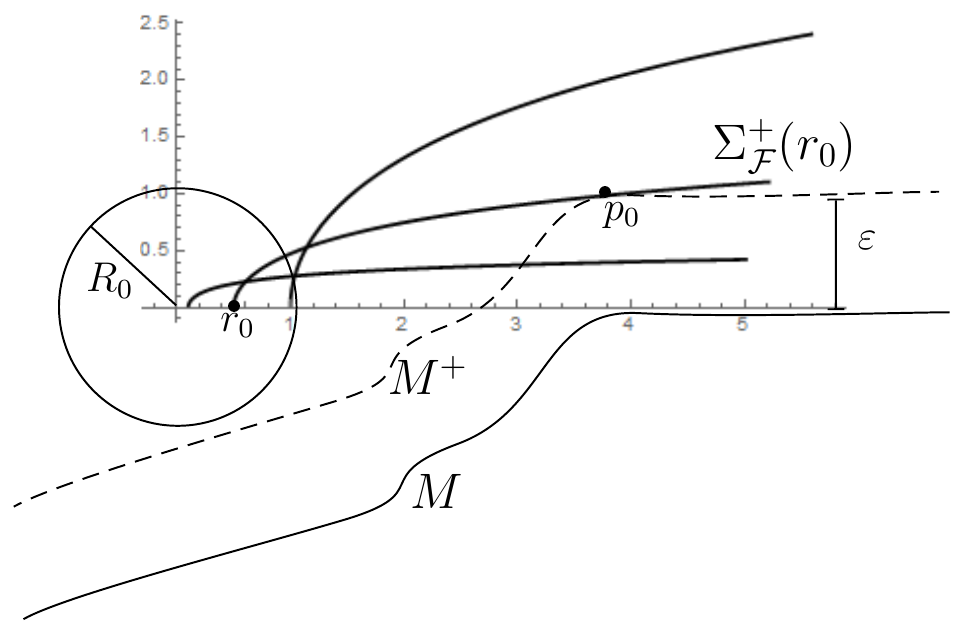}
\caption{By continuity there has to exist a first interior contact point $p_0$ between some $\sig_\F^+(r_0)$ and $M^+$.}
\label{halfspace}
\end{figure}
Now we argue as follows:

\begin{itemize}
\item[1.] If $(\eta_{\sig_\F^+(r_0)})_{p_0}=(\eta_{M^+})_{p_0}$, then $\sig_\F^+(r_0)$ lies locally above $M^+$ around $p_0$. But at $p_0$ we have
$$
H_{\sig_\F^+(r_0)}(p_0)=\F((\eta_{\sig_\F^+(r_0)})_{p_0})\leq\H((\eta_{\sig_\F^+(r_0)})_{p_0})=\H((\eta_{M^+})_{p_0})=H_{M^+}(p_0).
$$
contradicting the mean curvature comparison principle \ref{meancurvprinc}, since $\sig_\F^+(r_0)$ lies locally above $M^+$ around $p_0$.

\item[2.] If $(\eta_{\sig_\F^+(r_0)})_{p_0}=-(\eta_M)_{p_0}$, then we change the orientation of $M$, obtaining a $-\H$-surface with the opposite orientation $-\eta_M$. Thus, $\sig_\F^+(r_0)$ would lie locally above $M^+$ around $p_0$. But $\sig_\F^+(r_0)$ has negative mean curvature at $p_0$, and $M^+$ has positive mean curvature at $p_0$ after this change of orientation. This is again a contradiction with the mean curvature comparison principle \ref{meancurvprinc}.
\end{itemize}
In any case, we arrive to a contradiction and thus $M$ cannot be contained in a lower half-space.

The same arguments apply for the case that Item 2 holds, by just comparing $M$ with the lower ends of $\sig_\F(r)$.

The latter case, i.e. when Items 1 and 2 hold, is a straightforward consequence from the two previous cases.

This proves Theorem \ref{half-spaceth}.
\end{proof}

\begin{obs}
Note that we are only interested about the behavior of the function $\H$ near the points $N=(0,0,1)$ and $S=(0,0,-1)$. Thus, Theorem \ref{half-spaceth} is still valid if we are able to find a negative, 1-dimensional function $\F$ satisfying Equations \eqref{igualnorte} and \eqref{igualsur}, and such that $\H(x)\geq\F(\langle x,e_3\rangle)$ for every $x$ in a small neighborhood of the points $N,S$ (where $\F$ is defined). Then, we would be able to extend the function $\F$ to the interval $[-1,1]$ in such a way that its extension is smaller than $\H$ in the whole sphere.
\end{obs}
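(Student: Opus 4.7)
The Observation claims that any $\F$ defined only in small neighborhoods of the poles $y=\pm 1$, satisfying \eqref{igualnorte}--\eqref{igualsur} and the local inequality $\H(x)\ge\F(\langle x,e_3\rangle)$ on the corresponding spherical caps, can be promoted to some $\tilde\F\in\c1$ for which the global comparison $\H(x)\ge\tilde\F(\langle x,e_3\rangle)$ holds on all of $\mathbb{S}^2$; once this is done, Theorem \ref{half-spaceth} applies verbatim with $\tilde\F$ in place of $\F$. My plan is to construct $\tilde\F$ explicitly via a smooth cutoff, trading the absence of a comparison hypothesis on the middle strip of $\mathbb{S}^2$ for a sufficiently negative constant value of $\tilde\F$ on that region.

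Set $M:=1+\sup_{\mathbb{S}^2}|\H|$, which is finite since $\H\in C^1(\mathbb{S}^2)$ and $\mathbb{S}^2$ is compact. Suppose $\F$ is given on $[-1,-1+\delta_0]\cup[1-\delta_0,1]$ for some $\delta_0>0$, with the properties listed in the Observation. Choose $\delta_1\in(0,\delta_0)$ and a smooth cutoff $\chi:[-1,1]\to[0,1]$ with $\chi\equiv 1$ on $[-1,-1+\delta_1]\cup[1-\delta_1,1]$ and $\chi\equiv 0$ on $[-1+\delta_0,1-\delta_0]$. Define
$$
\tilde\F(y) \;:=\; \chi(y)\,\F(y)\,+\,(1-\chi(y))(-M),
$$
with the convention $\chi(y)\F(y)=0$ wherever $\chi(y)=0$. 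Since $\mathrm{supp}(\chi)$ lies inside the domain of $\F$, this produces a well-defined function on $[-1,1]$.

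Three properties remain to verify. \emph{(i)} $\tilde\F\in\c1$: the product $\chi\F$ extends smoothly by zero outside $\mathrm{supp}(\chi)\subset\mathrm{dom}(\F)$, so $\tilde\F\in C^1([-1,1])$; the boundary values $\tilde\F(\pm 1)=\F(\pm 1)=0$ follow from $\chi(\pm 1)=1$; and $\tilde\F(y)<0$ on $(-1,1)$ because both summands are nonpositive with at least one strictly negative at each interior point. \emph{(ii)} Since $\chi\equiv 1$ in neighborhoods of $\pm 1$, one has $\tilde\F=\F$ there, so the asymptotic conditions \eqref{igualnorte} and \eqref{igualsur} are preserved unchanged. \emph{(iii)} For the global comparison, write $y=\langle x,e_3\rangle$. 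If $\chi(y)>0$, then $y$ lies in the domain of $\F$, and the local hypothesis gives $\F(y)\le\H(x)$; combined with $-M\le\H(x)$, the convex-combination structure yields
$$
\tilde\F(y) \;=\; \chi(y)\F(y)+(1-\chi(y))(-M) \;\le\; \chi(y)\H(x)+(1-\chi(y))\H(x) \;=\; \H(x).
$$
If instead $\chi(y)=0$, then $\tilde\F(y)=-M\le\H(x)$ directly.

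The only subtle point---easy to mistake for an obstacle---is that the hypothesis $\F\le\H$ is available solely in the polar caps, whereas the conclusion requires a comparison over the entire sphere. The convex-combination template resolves this automatically: within $\mathrm{supp}(\chi)$ the local hypothesis is still in force, and outside $\mathrm{supp}(\chi)$ the estimate collapses to the trivial $-M\le\H(x)$ by the choice of $M$. No delicate $C^1$ matching of derivatives at the transition points is needed because $\chi$ is smooth with all derivatives vanishing outside its support, so the decomposition automatically produces an $\tilde\F\in\c1$ with the required properties.
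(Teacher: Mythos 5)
Your construction is correct and fills in exactly the extension that the Observation asserts without detail: the paper's own justification is the one-line claim that $\F$ can be extended to $[-1,1]$ so as to stay below $\H$, and your cutoff/convex-combination formula $\tilde\F=\chi\F+(1-\chi)(-M)$ with $M=1+\sup_{\mathbb{S}^2}|\H|$ is a clean explicit realization of that idea, preserving membership in $\c1$, the asymptotic conditions \eqref{igualnorte}--\eqref{igualsur}, and the global inequality $\tilde\F(\langle x,e_3\rangle)\le\H(x)$. This is essentially the same approach as the paper, just made precise.
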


\section{Appendix}

This appendix is devoted to prove that the $\H$-catenoids converge to a double covering of the plane minus the origin. The proof will be done by showing that the $\H$-catenoids, outside a compact set that converges to the origin, have uniformly bounded second fundamental form, and then we will be able to take limits as a straightforward consequence of a compactness argument.
 
\begin{pro}\label{formulassf}
Let be $\hi$ and $r_0$, and consider $\sig_\H(r_0)$ an $\H$-catenoid. Then, the squared norm of the second fundamental form of the upper end $\sig_\H^+(r_0)$ is given by
\begin{equation}\label{normassf}
|\sigma_{\sig_\H^+(r_0)}(x)|^2=4\H(\nu_{\sig_\H(r_0)}(x))^2+\frac{2\sqrt{1-\nu_{\sig_\H(r_0)}(x)^2}}{x}\left(\frac{2\sqrt{1-\nu_{\sig_\H(r_0)}(x)^2}}{x}-2\H(\nu_{\sig_\H(r_0)}(x))\right).
\end{equation}
\end{pro}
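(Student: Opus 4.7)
The plan is a direct pointwise computation of the two principal curvatures of the rotational surface and then assembly of $|\sigma|^2$ as the sum of their squares. From the parametrization $\psi_{f_+}(x,\theta)$ of Equation \eqref{pararot}, the coordinate frame $(\partial_x,\partial_\theta)$ is orthogonal and, with respect to the upward unit normal $\eta_{f_+}$ given right after Equation \eqref{pararot}, the shape operator is diagonal in this frame. The standard computation for a surface of revolution yields the principal curvatures
\begin{equation*}
k_1(x)=\frac{f_+''(x)}{(1+f_+'(x)^2)^{3/2}},\qquad k_2(x)=\frac{f_+'(x)}{x\,\sqrt{1+f_+'(x)^2}},
\end{equation*}
so that $|\sigma_{\sig_\H^+(r_0)}(x)|^2=k_1(x)^2+k_2(x)^2$.

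Next, I would rewrite both $k_1$ and $k_2$ in terms of the angle function $\nu_{f_+}(x)=1/\sqrt{1+f_+'(x)^2}$ of Equation \eqref{angulografo}. Because $\sig_\H^+(r_0)$ is the upper end we have $f_+'>0$, hence $f_+'(x)=\sqrt{1-\nu_{f_+}(x)^2}/\nu_{f_+}(x)$, and a direct substitution gives
\begin{equation*}
k_2(x)=\frac{\sqrt{1-\nu_{f_+}(x)^2}}{x}.
\end{equation*}
Rather than plugging the ODE \eqref{ODEfsegunda} for $f_+''$ into the expression for $k_1$, the cleanest route is to invoke the $\H$-surface relation $k_1+k_2=2H_{\sig_\H^+(r_0)}=2\H(\nu_{f_+})$ inherited from Equation \eqref{condicionHsup}, which yields immediately $k_1(x)=2\H(\nu_{f_+}(x))-\sqrt{1-\nu_{f_+}(x)^2}/x$.

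Finally, assembling
\begin{equation*}
|\sigma_{\sig_\H^+(r_0)}|^2=(2\H(\nu_{f_+})-k_2)^2+k_2^2=4\H(\nu_{f_+})^2-4\H(\nu_{f_+})\,k_2+2k_2^2
\end{equation*}
and factoring the quantity $2\sqrt{1-\nu_{f_+}^2}/x$ out of the last two terms produces the identity displayed in the statement.

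No conceptual obstacle is expected: the argument is a short, routine computation. The only points requiring care are (i) the sign choice $f_+'>0$ on the upper end, which legitimizes the extraction of the positive square root of $1-\nu_{f_+}^2$, and (ii) consistency of the orientation convention in the standard principal curvature formulas with the upward normal fixed in Section \ref{sec2}, which is exactly what makes the sign in $k_1+k_2=2\H(\nu_{f_+})$ come out correctly; both are immediate from the setup of Section \ref{sec2}.
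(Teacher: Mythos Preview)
Your argument is correct and follows essentially the same route as the paper: identify the two principal curvatures of the surface of revolution from the parametrization \eqref{pararot}, write $|\sigma|^2=\kappa_1^2+\kappa_2^2$, and re-express everything in terms of $\nu_{f_+}$. The only cosmetic difference is that the paper eliminates $f_+''$ by invoking the ODE \eqref{ODEfsegunda}, whereas you use the equivalent relation $k_1+k_2=2\H(\nu_{f_+})$ coming from \eqref{condicionHsup}; your shortcut is a bit cleaner but not a different idea. One small caveat: your final expression is $4\H(\nu)^2+\dfrac{2\sqrt{1-\nu^2}}{x}\Big(\dfrac{\sqrt{1-\nu^2}}{x}-2\H(\nu)\Big)$, which matches the version the paper actually uses later in the proof of Proposition \ref{doblerecubri}; the displayed formula \eqref{normassf} in the statement carries an extra factor $2$ in the inner parenthesis that appears to be a typo, so do not be alarmed that your factoring does not reproduce it verbatim.
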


\begin{proof}
We will derive this identity by computing the principal curvatures of the upper end $\sig_\H^+(r_0)$ parametrized as in Equation \eqref{pararot}. 

Indeed, this parametrization is doubly orthogonal and the principal curvatures already appeared in Equation \eqref{condicionHsup}; they are just the two terms on the r.h.s. of the equality. Thus,
$$
\begin{array}{c}
\vspace{.5cm}\kappa_1(x)=\displaystyle{\frac{{f_+}''(x)}{(1+{f_+}'(x)^2)^{3/2}}},\\
\kappa_2(x)=\displaystyle{\frac{{f_+}'(x)}{x\sqrt{1+{f_+}'(x)^2}}},
\end{array}
$$
where $f_+:(r_0,\infty)\rightarrow\R$ is the function that defines the upper end $\sig_\H^+(r_0)$ in Equation \eqref{pararot}.

Because $|\sigma_{\sig_\H(r_0)}(x)|^2=\kappa_1(x)^2+\kappa_2(x)^2$, Equation \eqref{normassf} yields by substituting the values of $\kappa_1(x)$ and $\kappa_2(x)$, and using that $f_+''(x)$ is a solution of the ODE \eqref{ODEfsegunda}.

This concludes the proof of Proposition \ref{formulassf}.
\end{proof}

Notice that for computing the quantity $|\sigma_{\sig_\H^-(r_0)}(x)|^2$ at the lower end $\sig_\H^-(r_0)$ with the parametrization given in Equation \eqref{pararot}, we have to change its orientation that was given by the unit normal defined in Equation \eqref{normalcontrario}, and the sign of the mean curvature. We omit the details.
 
\begin{pro}\label{doblerecubri}
Let be $\hi$, and consider the 1-parameter family of $\H$-catenoids $\{\sig_\H(r)\}_{r>0}$. Then, $\{\sig_\H(r)\}_{r\rightarrow 0}$ converges in the $C^3$ topology to a double covering of the plane $\{z=0\}$ minus the origin.
\end{pro}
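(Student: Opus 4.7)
The plan is to use the explicit curvature formula of Proposition \ref{formulassf} together with the comparison results of Propositions \ref{comparisoncats} and \ref{monotoniaderivadas} to obtain uniform $C^3$-estimates on the generating curves of the catenoids, on every compact subset of $\{z=0\}$ disjoint from the origin, and then read off the convergence by rotational symmetry. It therefore suffices to work with the upper and lower generating curves $f_r^{\pm}:(r,\infty)\to\R$ of $\sig_\H(r)$ and show that, on every compact interval $[\delta,N]\subset(0,\infty)$, both $f_r^{\pm}$ converge to $0$ in the $C^3$-topology as $r\to 0$.

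First I would fix $\delta>0$ and restrict to $r<\delta$. Because $\H(y)<0$ on $(-1,1)$, applying the mean curvature comparison argument that underlies Propositions \ref{comparisoncats} and \ref{monotoniaderivadas} with the zero prescription (exactly as in Claim 1 in the proof of Theorem \ref{alfamayor1}) sandwiches $f_r^+$ and $(f_r^+)'$ between $0$ and the corresponding quantities for the minimal catenoid of necksize $r$:
$$0<f_r^+(x)<r\log\!\Bigl(\tfrac{x+\sqrt{x^2-r^2}}{r}\Bigr), \qquad 0<(f_r^+)'(x)<\tfrac{r}{\sqrt{x^2-r^2}}.$$
Both right-hand sides tend uniformly to $0$ on $[\delta,N]$ as $r\to 0$, so $f_r^+\to 0$ and $(f_r^+)'\to 0$ uniformly. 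Consequently the angle function $\nu_{f_r^+}=1/\sqrt{1+(f_r^+)'^2}$ converges uniformly to $1$. Plugging these facts and $\nu\in[0,1]$ into Proposition \ref{formulassf} yields the uniform bound
$$|\sigma_{\sig_\H^+(r)}(x)|^2\leq 4\|\H\|_\infty^2+\tfrac{4}{\delta^2}+\tfrac{4\|\H\|_\infty}{\delta},\qquad x\geq\delta,$$
independent of $r<\delta$, where $\|\H\|_\infty:=\max_{[-1,1]}|\H|$.

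To upgrade to $C^3$, I would substitute the uniform smallness of $f_r^+$ and $(f_r^+)'$ into the ODE \eqref{ODEfsegunda}: since $\H(\nu_{f_r^+})\to\H(1)=0$ and $(f_r^+)'/x\to 0$ uniformly on $[\delta,N]$, the right-hand side of \eqref{ODEfsegunda} gives $(f_r^+)''\to 0$ uniformly. Differentiating \eqref{ODEfsegunda} once more produces a polynomial expression in $(f_r^+)'$, $(f_r^+)''$ and $1/x$, with coefficients involving $\H$ and $\H'$ evaluated at $\nu_{f_r^+}$; here the hypothesis $\H\in C^1([-1,1])$ is essential. Every such term is either uniformly small or uniformly bounded on $[\delta,N]$, whence $(f_r^+)'''\to 0$ uniformly as well. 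The same argument applies to $f_r^-$ after accounting for the orientation change built into Equation \eqref{normalcontrario}. Combining the two ends, $\sig_\H(r)\cap\bigl\{\delta\leq\sqrt{x^2+y^2}\leq N\bigr\}$ converges in $C^3$ to two copies of the annulus $\{z=0\}\cap\{\delta\leq\sqrt{x^2+y^2}\leq N\}$, and letting $\delta\to 0$ and $N\to\infty$ gives the claim.

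I expect the most delicate step to be verifying that no term in Proposition \ref{formulassf} can blow up as $r\to 0$, which reduces to checking that $(1-\nu^2)/x^2$ stays controlled by $1/\delta^2$ on our domain, and that the denominators $\sqrt{x^2-r^2}$ appearing in the minimal-catenoid comparison bounds are uniformly bounded away from zero once $r<\delta$. Both points follow from the constraint $x\geq\delta>r$, so once the domain of convergence is taken to avoid any neighborhood of the rotation axis, the remaining work is a standard bootstrap from the ODE.
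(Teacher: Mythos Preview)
Your argument is correct and in fact more direct than the paper's. The paper proceeds by selecting, via the phase-plane analysis of \cite{BGM}, a particular sequence of catenoids $\sig_\H(r_n)$ (determined by $x_n=1/n$, $\nu_n=1-1/n^2$), uses Proposition \ref{formulassf} together with a monotonicity observation to bound $|\sigma|^2$ uniformly on $\{x\geq x_n\}$, and then invokes a compactness theorem for $\Hss$ from \cite{BGM} to conclude $C^3$ convergence to the punctured plane. You bypass the abstract compactness machinery entirely: the comparison with the minimal catenoid (which the paper itself carries out in Claim 1 of Theorem \ref{alfamayor1}, so extending Propositions \ref{comparisoncats}--\ref{monotoniaderivadas} to the zero prescription is legitimate) gives explicit bounds forcing $f_r^\pm$ and $(f_r^\pm)'$ to zero uniformly on $[\delta,N]$, and the ODE \eqref{ODEfsegunda} together with $\H\in C^1$ then bootstraps this to $(f_r^\pm)''$ and $(f_r^\pm)'''$. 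Your route identifies the limit and proves convergence simultaneously, for every $r\to 0$ rather than along a specific subsequence, whereas the paper's template (curvature bound plus compactness) delegates both the identification of the limit and the passage from subsequential to full convergence to the external reference. The paper's approach is more portable to settings where no explicit comparison surface is available; yours is the more self-contained and elementary argument here.
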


\begin{proof}
The proof will be done as follows: for each $r>0$, we will find an open subset in $\sig_\H(r)$ with uniformly bounded second fundamental form, and such that this open subset does not contain the waist of $\sig_\H(r)$. At this point, a standard compactness argument for $\Hss$ obtained in \cite{BGM} will eventually conclude the result.

Thus, our main objective is to properly define an open subset in each $\sig_\H(r)$ having uniformly bounded second fundamental form. We will focus as usual in the upper end of $\sig_\H(r)$, since the computations for the lower end are similar. In order to save notation, we will omite the super index $(\cdot)^+$ referring to the upper end.

Let be $x_n=1/n,\ \nu_n=1-1/n^2,\ n\in\N$. The phase plane analysis done in \cite{BGM} ensures us that for each $(x_n,\nu_n)$ there exists an $\H$-catenoid $\sig_\H(r_n)$, with $r_n<x_n$ uniquely determined by $(x_n,\nu_n)$, and such that the angle function $\nu_{\sig_\H(r_n)}$ of $\sig_\H(r_n)$ satisfies $\nu_{\sig_\H(r_n)}(x_n)=\nu_n$.

Now, Equation \ref{normassf} ensures us that the squared norm of the second fundamental form of each $\sig_\H(r_n)$ is given by
$$
|\sigma_{\sig_\H(r_n)}(x)|^2=4\H(\nu_{\sig_\H(r_n)}(x))^2+2\frac{\sqrt{1-\nu_{\sig_\H(r_n)}(x)^2}}{x}\left(\frac{\sqrt{1-\nu_{\sig_\H(r_n)}(x)^2}}{x}-2\H(\nu_{\sig_\H(r_n)}(x))\right).
$$

Fix some $n\in\N$. If $x>y>r_n$, then $\nu_{\sig_\H(r_n)}(x)>\nu_{\sig_\H(r_n)}(y)$ and thus 
$$
\H(\nu_{\sig_\H(r_n)}(x))<\H(\nu_{\sig_\H(r_n)}(y)).
$$
In particular, we have
\begin{equation}\label{cota2ff1}
|\sigma_{\sig_\H(r_n)}(x)|^2<|\sigma_{\sig_\H(r_n)}(x_n)|^2,\hspace{.5cm} \forall x>x_n.
\end{equation}

If we compute $|\sigma_{\sig_\H(r_n)}(x_n)|^2$, we get
$$
|\sigma_{\sig_\H(r_n)}(x_n)|^2= 4\H(1-1/n)^2+2\sqrt{2-1/n^2}\left(\sqrt{2-1/n^2}-2\H(1-1/n)\right),\hspace{.5cm} \forall n\in\N.
$$
Because $\H$ is a negative function vanishing at $y=1$, we can consider the bound $-\H(1-1/n)<1$ for $n>n_0$, where $n_0\in\N$ is big enough. Bearing this in mind, the following estimate holds
\begin{equation}\label{cota2ff2}
|\sigma_{\sig_\H(r_n)}(x_n)|^2<4(2+\sqrt{2}),\hspace{.5cm} \forall n>n_0\in\N.
\end{equation}
Now, plugging together Equations \eqref{cota2ff1} and \eqref{cota2ff2} yields
\begin{equation}\label{cota2ffunif}
|\sigma_{\sig_\H(r_n)}(x)|^2<4(2+\sqrt{2}),\hspace{.5cm} \forall n>n_0\in\N,\ \forall x>x_n.
\end{equation}

Now we argue similarly in the lower ends $\sig_\H^-(r)$ to also obtain a uniformly bound of their second fundamental forms.

Thus, for every $n>n_0$ the squared norm of the second fundamental form of the $\H$-catenoid $\widetilde{\sig_\H(r_n)}=\sig_\H(r_n)\cap\{x\geq x_n\}$ is uniformly bounded. At this point, a standard compactness argument for $\Hss$, see e.g. Theorem in \cite{BGM} ensures us that the sequence $\widetilde{\sig_\H(r_n)}$ smoothly converges in the $C^3$ topology to a double covering of the the plane $\{z=0\}$ minus the origin.

This proves Proposition \ref{doblerecubri}.
\end{proof}

 \def\refname{References}

\end{document}